\documentclass[a4paper, 12pt]{amsart}
\usepackage[T1]{fontenc}
\usepackage[utf8]{inputenc}
\usepackage{paralist}
\usepackage{amsmath}
\usepackage{amsfonts}
\usepackage{amssymb}
\usepackage{amsthm}

\tolerance 1414
\hbadness 1414
\emergencystretch 1.5em
\hfuzz 0.3pt
\widowpenalty=10000
\vfuzz \hfuzz
\raggedbottom

\newcommand{\N}{\mathbb{N}}

\newcommand{\Z}{\mathbb{Z}}
\newcommand{\m}{\mathrm{mod} \thinspace}

\newcommand{\Ext}{\mathrm{Ext}}

\newcommand{\GL}{\mathrm{GL}}

\newcommand{\Mat}{\mathrm{Mat}}
\newcommand{\diag}{\mathrm{diag}}

\newtheorem{theorem}{Theorem}[section]
\newtheorem*{theorem1}{Theorem}
\newtheorem{corollary}[theorem]{Corollary}
\newtheorem{lemma}[theorem]{Lemma}
\newtheorem{definition}[theorem]{Definition}
\newtheorem{proposition}[theorem]{Proposition}

\newtheorem{assumption}[theorem]{Assumption}

\begin{document}

\begin{abstract}
Using the general framework of polynomial representations defined by Doty and generalizing the definition given by Doty, Nakano  and Peters for $G = \GL_n$, we consider polynomial representations of $G_r T$ for an arbitrary closed reductive subgroup scheme $G \subseteq \GL_n$ and a maximal torus $T$ of $G$ in positive characteristic. We give sufficient conditions on $G$ making a classification of simple polynomial $G_r T$-modules similar to the case $G = \GL_n$ possible and apply this to recover the corresponding result for $\GL_n$ with a different proof, extending it to symplectic similitude groups, Levi subgroups of $\GL_n$ and, in a weaker form, to odd orthogonal similitude groups. We also consider orbits of the affine Weyl group and give a condition for equivalence of blocks of polynomial representations for $G_r T$ in the case $G = \GL_n$.
\end{abstract}

\title{On simple polynomial $G_r T$-modules}
\author{Christian Drenkhahn}
\maketitle

\section{Introduction}
When considering rational representations of $G = \GL_n$ over an infinite field, one can reduce many questions to polynomial representations. These representations correspond to representations of certain finite-dimensional algebras, the so-called Schur algebras (see for example \cite{G2, Martin1}). Letting $T$ be a maximal torus of $G$ and $k$ be an infinite field of positive characteristic, one can also study the category of $G_r T$-modules to get information on the representation theory of $G$ (see \cite[II.9]{Jantz1}).\\
\indent In \cite{DNP2}, Doty, Nakano and Peters combined these approaches and considered polynomial representations of $G_r T$ for $G = \GL_n$. They show that these representations correspond to representations of the so-called infinitesimal Schur algebras which are subalgebras of the ordinary Schur algebras. \\
\indent In \cite{Doty1}, Doty defined a general notion of polynomial representations and analogues of Schur algebras for algebraic groups contained in $\GL_n$. Using his definition, one can study polynomial representations of $G_r T$ for other algebraic groups $G$. \\
\indent If a reductive group $G \subseteq \GL_n$ admits a graded polynomial representation theory in the sense of \cite{Doty1}, it is natural to ask for which character $\lambda$ contained in the character group $X(T)$ of $T$ the simple $G_r T$-module $\widehat{L_r}(\lambda)$ is a polynomial $G_r T$-module. This problem was solved for $G = \GL_n$ in \cite{DNP2}, but in contrast to most results of that paper, the proof does not carry over to other reductive groups.\\
\indent Motivated by this problem, we develop a new technique using functions $\varphi: X(T) \rightarrow \Z^l$ with certain technical properties, see \ref{assumption: existence of function}. The existence of such a function implies a classification of simple polynomial $G_r T$-modules similar to the case $G = \GL_n$ (see Theorem \ref{thm: all weights of simple modules polynomial iff weight belongs to Xr(D)+p^r P(D)}, Corollary \ref{cor: criterion for simple modules with Phi and d}). We give a criterion for the existence of such a function (see Theorem \ref{thm: construction of phi}), allowing us to give a new proof of the result for $\GL_n$ and extend it to several other cases in a uniform way.\\
\indent    
Let $D = \overline{T}$ be the closure of $T$ in the algebraic monoid of $(n \times n)$-matrices, $P(D) \subseteq X(T)$ the set of polynomial weights, i.e. the character monoid of $D$, and $X_r(T) \subseteq X(T)$ the set of $p^r$-restricted weights. Then our main application can be summarized as follows:
\begin{theorem1}
\begin{enumerate}[(a)]
\item Let $G = GSp_{2l}$ be the symplectic dilation group, $T \subseteq G$ the torus of diagonal matrices. Then a complete set of representatives for the isomorphism classes of simple polynomial $G_r T$-modules is given by $\lbrace \widehat{L_r}(\lambda) \vert \lambda \in P_r(D)+ p^r P(D)\rbrace$, where $P_r (D)= \lbrace \lambda \in P(D) \cap X_r(T) \vert  \lambda - p^r d \notin P(D)\rbrace$ and $d$ is the character of $T$ given by $t \mapsto t_{11}t_{nn}$.
\item Let $G = GO_{2l+1}$ be the connected component of the odd orthogonal dilation group,  $T$ the torus of diagonal matrices in $G$. If $\lambda \in X_r(T) + p^r X(T)$ and all weights of the module $\widehat{L_r}(\lambda)$ are polynomial, then $\lambda \in P_r(D) + p^r P(D)$, where $P_r (D)= \lbrace \lambda \in P(D) \cap X_r(T) \vert  \lambda - p^r d \notin P(D)\rbrace$ and $d$ is the character of $T$ given by $t \mapsto t_{l+1}$. 
\item Let $n_i \in \N$ such that $n = \sum_{i=1}^{l} n_i$ and embed $G = \GL_{n_1} \times \ldots \times \GL_{n_l}$ into $\GL_n$ as a Levi subgroup in the obvious way. Let $T$ be the torus of diagonal matrices in $G$, $M_i = \{n_{1}+ \ldots + n_{i-1}+1, \ldots, n_{1}+ \ldots + n_{i}\}$ and $d_i = \sum_{j \in M_i} e_j$ for $1 \leq i \leq l$, where $e_1, \ldots, e_n$ is the standard basis of $X(T) \cong \Z^n$. Then a complete set of representatives for the isomorphism classes of simple polynomial $G_r T$-modules is given by $\{\widehat{L_r}(\lambda) \vert \lambda \in P_r(D)+p^r P(D)\}$, where $P_r(D) = \{\lambda \in X_r(T) \cap P(D) \vert \lambda - p^r d_i \notin P(D)$ for  $1 \leq i \leq l\}$. 

\end{enumerate}
\label{thm: introduction}
\end{theorem1}

By studying the intersection of orbits of the affine Weyl group with suitable sets of polynomial weights, we also show that certain shift functors induce Morita equivalences between blocks of polynomial representations of $G_r T$ for $G = \GL_n$.       
  
\section{Notation and prerequisites}
In this section, we fix notation and recall some basic results on polynomial representations. For algebraic groups and $G_r T$, we follow the notation from \cite{Jantz1}. For an introduction to algebraic monoids, we refer the reader to \cite{Ren1}.\\
\indent Let $k$ be an infinite perfect field of characteristic $p > 0$ and $\Mat_n$ be the monoid scheme of $(n \times n)$-matrices over $k$. Let $d \in \N_0$ and denote by $A(n, d)$ the space generated by all homogeneous polynomials of degree $d$ in the coordinate ring $k[\Mat_n] = k[X_{ij}\vert 1 \leq i,j \leq n]$ of $\Mat_n$. Then $k[\Mat_n] = \bigoplus_{d \geq 0} A(n, d)$ is a graded $k$-bialgebra with comultiplication $\Delta: k[\Mat_n] \rightarrow k[\Mat_n] \otimes_k k[\Mat_n]$ and counit $\epsilon: k[\Mat_n] \rightarrow k$ given by 
\begin{equation*}
\Delta(X_{ij}) = \sum_{l=1}^n X_{il} \otimes_k X_{lj}, \quad \epsilon(X_{ij}) = \delta_{ij}
\end{equation*}
and each $A(n, d)$ is a subcoalgebra.\\
\indent As $\GL_n$ is dense in $\Mat_n$, the canonical map $k[\Mat_n] \rightarrow k[\GL_n]$ is injective, so that $k[\Mat_n]$ and each $A(n, d)$ can be viewed as a subcoalgebra of $k[\GL_n]$.\\
\indent Now let $G$ be a closed subgroup scheme of $\GL_n$ and $\pi: k[\GL_n] \rightarrow k[G]$ the canonical projection. Set $A(G) = \pi(k[\Mat_n])$ and $A_d(G)= \pi(A(n, d))$ as well as $ S_d(G) = A_d(G)^*$. Since $\pi$ is a homomorphism of Hopf algebras, $A(G)$ is a subbialgebra of $k[G]$ and each $A_d(G)$ is a subcoalgebra of $A(G)$. As $A_d(G)$ is finite dimensional, $S_d(G)$ is a finite dimensional associative algebra in a natural way. \\
\indent For $G = \GL_n$ and $T$ a maximal torus of $G$, the algebra $S_d(G)$ is the ordinary Schur algebra, while $S_d(G_r T)$ is the infinitesimal Schur algebra defined in \cite{DNP2}.\\ \indent Following \cite{Doty1}, we give the following definition.
\begin{definition}
\begin{enumerate}[(1)]
\item We say that $G$ admits a graded polynomial representation theory if the sum $\sum_{d \geq 0} A_d(G)$ is direct.
\item  We say that a rational $G$-module $V$ is a polynomial $G$-module if the corresponding comodule map $V \rightarrow V \otimes k[G]$ factors through $V \otimes A(G)$. 
\item If the comodule map $V \rightarrow V \otimes k[G] $ factors through $V \otimes A_d(G)$ for some $d \in \N_0$, we say that $V$ is homogeneous of degree $d$. 
\end{enumerate}
\end{definition}
Clearly, every $A_d(G)$-comodule is an $A(G)$-comodule and every $A(G)$-comodule is a $G$-module in a natural way. We record some other properties of polynomial representations from \cite[1.2]{Doty1} in the following proposition.

\begin{proposition}
\begin{enumerate}[(1)]
\item Suppose $G$ admits a graded polynomial representation theory and $V$ is a polynomial $G$-module. Then $V = \bigoplus_{d \geq 0} V_d$, where each $V_d$ is homogeneous of degree $d$. Furthermore, the category of homogeneous modules of degree $d$ is equivalent to the category of $S_d(G)$-modules.
\item If $G$ contains the center $Z(\GL_n)$ of $\GL_n$, then $G$ admits a graded polynomial representation theory. 
\end{enumerate}
\end{proposition}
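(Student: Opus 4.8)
The plan is to reduce both parts to the elementary theory of comodules over a coalgebra that splits as a direct sum of subcoalgebras. The key structural fact to record first is that, because $A(n,d)$ is a subcoalgebra of $k[\Mat_n]$ and $\pi$ is a homomorphism of coalgebras, each $A_d(G) = \pi(A(n,d))$ is a subcoalgebra of $A(G)$, so that the hypothesis in part (1) that $G$ admits a graded polynomial representation theory says exactly that $A(G) = \bigoplus_{d \geq 0} A_d(G)$ as coalgebras, with $\Delta(A_d(G)) \subseteq A_d(G) \otimes A_d(G)$.

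For the decomposition in part (1), I would take a polynomial $G$-module $V$ with structure map $\rho \colon V \to V \otimes A(G)$, split $\rho = \sum_d \rho_d$ according to $V \otimes A(G) = \bigoplus_d (V \otimes A_d(G))$, and set $\pi_d = (\id \otimes \epsilon) \circ \rho_d \colon V \to V$. Since any element of $V \otimes A(G)$ is a finite sum, the counit axiom $(\id \otimes \epsilon)\rho = \id_V$ gives $\sum_d \pi_d = \id_V$ with only finitely many $\pi_d v$ nonzero for each $v$. Coassociativity $(\rho \otimes \id)\rho = (\id \otimes \Delta)\rho$, combined with $\Delta(A_d(G)) \subseteq A_d(G)\otimes A_d(G)$ and the directness of the sum, forces the mixed component $(\rho_e \otimes \id)\rho_d$ to vanish for $e \neq d$; contracting the two outer tensor factors with $\epsilon$ then yields $\pi_e \pi_d = 0$ for $e \neq d$. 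Hence the $\pi_d$ are orthogonal idempotents summing to the identity, giving $V = \bigoplus_d V_d$ with $V_d = \pi_d V = \{v \in V \mid \rho(v) \in V \otimes A_d(G)\}$. A short dual-basis argument (writing $\rho(v) = \sum_i u_i \otimes a_i$ with $a_i$ linearly independent in $A_d(G)$, applying coassociativity, and pairing the last factor with a dual basis) shows each $u_i \in V_d$, so $\rho(V_d) \subseteq V_d \otimes A_d(G)$ and $V_d$ is homogeneous of degree $d$.

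For the category equivalence in part (1), the point is that a homogeneous module of degree $d$ is precisely an $A_d(G)$-comodule via the inclusion $A_d(G) \hookrightarrow A(G) \hookrightarrow k[G]$; and since $A_d(G)$ is finite dimensional, the category of $A_d(G)$-comodules is equivalent to the category of modules over the dual algebra $A_d(G)^* = S_d(G)$, because dualising the coaction turns every comodule into an $S_d(G)$-module and finite dimensionality removes any rationality obstruction, so every $S_d(G)$-module arises this way. This is the standard comodule--module duality and I expect it to be routine. For part (2), I would exhibit the grading through the central one-parameter subgroup of scalars: since $Z(\GL_n) \cong \mathbb{G}_m$ consists of scalar matrices and lies in $G$, left multiplication makes $\mathbb{G}_m$ act on $G$ as the restriction of scalar multiplication on $\Mat_n$. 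On coordinate rings scalar multiplication sends $X_{ij} \mapsto t \otimes X_{ij}$, so $A(n,d)$ is exactly the weight-$d$ space; by compatibility of $\pi$ with the two actions, the comorphism $k[G] \to k[t,t^{-1}] \otimes k[G]$ places $A_d(G)$ in the weight-$d$ component $k[G]_d$ of the resulting $\Z$-grading. As the weight-space decomposition of a $\mathbb{G}_m$-module is a direct sum, the $k[G]_d$ are independent, and hence so are the $A_d(G)$, which is the assertion that $\sum_{d\geq 0} A_d(G)$ is direct.

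I expect the only genuine work to lie in part (1): verifying carefully, from coassociativity together with the subcoalgebra property, that the $\pi_d$ are orthogonal idempotents and that each summand $V_d$ is truly homogeneous of degree $d$. The finite-dimensional comodule--module duality and the scalar-weight argument of part (2) should then be essentially bookkeeping.
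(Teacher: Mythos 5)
The paper itself contains no proof of this proposition: it is imported wholesale from Doty, with the sentence ``We record some other properties of polynomial representations from \cite[1.2]{Doty1}'' serving as the entire justification. So there is no internal argument to compare against; what you have done is reconstruct the proofs behind the citation, and your reconstruction is correct. It is also essentially the standard (and Doty's) route: part (1) is the general fact that a comodule over a coalgebra decomposing as a direct sum of subcoalgebras $A(G)=\bigoplus_d A_d(G)$ splits accordingly, proved exactly as you do via the maps $\pi_d = (\id\otimes\epsilon)\rho_d$, orthogonality from coassociativity plus $\Delta(A_d(G))\subseteq A_d(G)\otimes A_d(G)$, and the finite-dimensional comodule--module duality $\mathrm{Comod}\,A_d(G)\simeq S_d(G)\text{-}\mathrm{mod}$ (finite dimensionality is indeed what guarantees every $S_d(G)$-module is rational); part (2) is precisely the central-torus grading argument: since $Z(\GL_n)\cong\mathbb{G}_m$ lies in $G$, scalar multiplication restricts to $G$, the comorphism puts $A_d(G)$ in the weight-$d$ space of the induced $\Z$-grading of $k[G]$, and distinct weight spaces of a rational $\mathbb{G}_m$-module sum directly.

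One small step you leave implicit, though it uses only the contraction technique you already employ: to identify $\pi_d V$ with $\{v\in V \mid \rho(v)\in V\otimes A_d(G)\}$ you should also contract \emph{only} the last tensor factor of the identity $(\rho_e\otimes\id)\rho_d=0$ ($e\neq d$) with $\epsilon$, giving $\rho_e\circ\pi_d=0$ and hence $\rho(\pi_d v)\in V\otimes A_d(G)$; the reverse inclusion is the counit axiom, since $\rho(v)\in V\otimes A_d(G)$ forces $\pi_d(v)=v$. With that line added, and the routine observation that $G$-module homomorphisms between homogeneous degree-$d$ modules coincide with $A_d(G)$-comodule maps (the coaction is determined by the $G$-action via $A_d(G)\subseteq k[G]$), your argument is complete and self-contained, which is more than the paper offers.
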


As $A(G)$ is a factor bialgebra of $k[\Mat_n]$, it corresponds to a closed submonoid $M$ of $\Mat_n$. Since $A(G) \subseteq k[G]$ is a subbialgebra, $G \subseteq M$ is a dense subscheme, so that $M$ is the closure of $G$ in $\Mat_n$. Hence polynomial representations of $G$ can be regarded as rational representations of the algebraic monoid scheme $M = \overline{G}$. \\
\indent If $G = \GL_n$, a comparison of coordinate rings shows $\overline{G_r T} = M_r D$, where $M_r D = (F^r)^{-1}(D)$ with $F^r$ the $r$-th iteration of the Frobenius homomorphism, see \cite{DNP2}. \\
This can fail for general $G$: If $G$ is the full orthogonal similitude group of a 2-dimensional vector space with symmetric non-degenerate bilinear form and $T$ the torus of diagonal matrices in $G$, then $G_r T = T$, but $M_r D$ is strictly larger than $D = \overline{T}$. However, we always have $\overline{G_r T} \subseteq M_r D$. We do not know whether $M_r D = \overline{G_r T}$ e.g. for connected $G$.

\section{Simple polynomial $G_r T$-modules}

In this section, let $G \leq \GL_n$ be a closed connected reductive subgroup scheme containing $Z(\GL_n)$, $T \subseteq G$ be a maximal torus contained in a maximal torus $T' \subseteq \GL_n$. This is not really a restriction: It is well-known that any affine algebraic group can be embedded into $\GL_n$ as a closed subgroup for some $n \in \N$. If $G \subseteq \GL_n$ does not contain $Z(\GL_n)$, we can pass to the group $\langle G, Z(\GL_n)\rangle$ to get a reductive group containing $Z(\GL_n)$.\\
\indent Let $M$ be the closure of $G$ in $\Mat_n$, $D$ be the closure of $T$, $D'$ be the closure of $T'$ and $P(D) \subseteq X(T)$ be the character monoid of $D$, i.e. the set of polynomial weights, $W$ the Weyl group of $G$. Let $R$ be the root system of $G$ with respect to $T$ and $S \subseteq R$ a set of simple roots. Denote by $\langle- , -\rangle: X(T) \times Y(T) \rightarrow \Z$ the canonical perfect pairing of $X(T)$ and the cocharacter group $Y(T)$. For $\alpha \in R$, let $\alpha^{\vee}$ be the coroot of $\alpha$. Let $X_0(T) = \{\lambda \in X(T) \vert \langle\lambda, \alpha^{\vee}\rangle = 0$ for all $ \alpha \in R\}$ as well as $X_r(T) = \{\lambda \in X(T) \vert 0 \leq \langle\lambda, \alpha^{\vee}\rangle \leq p^r-1$ for all $\alpha \in S\}$.\\
\indent We want to determine the simple polynomial $G_r T$-modules. The isomorphism classes of simple $G_r T$-modules are parametrized by $X(T)$ via  $\lambda \mapsto \widehat{L_r}(\lambda)$, where $\widehat{L_r}(\lambda)$ has highest weight $\lambda$. For convenience and further reference, we collect some basic properties of the $\widehat{L}_r(\lambda)$ from \cite[II.9.6]{Jantz1} in the form we need in the following proposition. 
\begin{proposition}[\cite{Jantz1}]
\begin{enumerate}[(a)]
\item For each $\lambda \in X(T)$, there is a simple $G_r T$-module $\widehat{L}_r(\lambda)$ with $\dim_k (\widehat{L}_r(\lambda))_{\lambda} = 1$. Each weight $\mu$ of $\widehat{L}_r(\lambda)$ satisfies $\mu \leq \lambda$ with respect to the standard ordering on $X(T)$. 
\item Each simple $G_r T$-module is isomorphic to exactly one $\widehat{L}_r(\lambda)$ with $\lambda \in X(T)$.
\item Each $\widehat{L}_r(\lambda)$ is isomorphic to $L_r(\lambda)$ as a $G_r$-module.
\item There are isomorphisms of $G_r T$-modules 
\begin{equation*}
\widehat{L}_r(\lambda+ p^r \mu) \cong \widehat{L}_r(\lambda) \otimes_k p^r \mu
\end{equation*} 
for all $\lambda, \mu \in X(T)$.
\item If $\lambda \in X_r(T)$, then $\widehat{L}_r(\lambda) \cong L(\lambda) \vert_{G_r T}$, where $L(\lambda)$ is the simple $G$-module of highest weight $\lambda$. 
\end{enumerate}
\label{prop: properties of the Lr(lambda)}
\end{proposition}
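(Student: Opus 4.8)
The plan is to build the representation theory of $G_r T$ out of that of its Borel subgroups, following the standard pattern for an infinitesimal group scheme extended by a torus. Fix a Borel subgroup $B = TU \supseteq T$ with unipotent radical $U$, let $U^-$ be the opposite unipotent radical, and put $N = \vert R^+ \vert$. Since $B_r T$ is the product of the infinitesimal unipotent group $U_r$ and the diagonalizable group $T$, its simple modules are precisely the one-dimensional modules $k_\mu$, $\mu \in X(T)$, on which $U_r$ acts trivially. Inducing $k_\lambda$ up from $B_r T$, I would form the baby Verma module $\widehat{Z}_r(\lambda)$, arranged so as to be generated by a one-dimensional highest weight space of weight $\lambda$. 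Using the product decomposition $G_r T \cong U_r^- \times B_r T$ (so that the relevant homogeneous space has coordinate ring $k[U_r^-]$ of dimension $p^{rN}$), one checks that $\widehat{Z}_r(\lambda)$ is $p^{rN}$-dimensional, that all its $T$-weights $\mu$ satisfy $\mu \leq \lambda$, and that the $\lambda$-weight space is one-dimensional. The sum of all submodules avoiding the $\lambda$-weight line is then the unique maximal submodule, so $\widehat{Z}_r(\lambda)$ has a simple head $\widehat{L}_r(\lambda)$ with $\dim_k(\widehat{L}_r(\lambda))_\lambda = 1$ and all other weights strictly below $\lambda$; this gives (a).

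For (b) I would take an arbitrary simple $G_r T$-module $S$, decompose it under $T$, and choose a maximal weight $\lambda$. By maximality the $\lambda$-weight space is annihilated by the positive nilpotent part, so a nonzero vector there spans a copy of $k_\lambda$ as a module over the appropriate Borel and, by Frobenius reciprocity, exhibits $S$ as a quotient of $\widehat{Z}_r(\lambda)$; hence $S \cong \widehat{L}_r(\lambda)$, and distinct highest weights yield non-isomorphic heads, giving the bijective parametrization. Part (d) is then formal: the character $p^r\mu$ factors through the $r$-th Frobenius morphism $F^r$, which kills $G_r$, so $k_{p^r\mu}$ is a one-dimensional $G_r T$-module trivial on $G_r$. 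Tensoring $\widehat{L}_r(\lambda)$ with it produces a simple $G_r T$-module whose $T$-weights are those of $\widehat{L}_r(\lambda)$ shifted by $p^r\mu$, so of highest weight $\lambda + p^r\mu$; by (b) it must be $\widehat{L}_r(\lambda + p^r\mu)$.

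It remains to compare with $G_r$- and $G$-modules. Because $p^r X(T)$ acts trivially on $G_r$, part (d) shows that $\widehat{L}_r(\lambda)\vert_{G_r}$ depends only on the class of $\lambda$ modulo $p^r X(T)$, so for (c) it suffices to treat $\lambda \in X_r(T)$ and to check that $\widehat{L}_r(\lambda)\vert_{G_r}$ stays simple (Clifford theory for the normal subgroup $G_r \trianglelefteq G_r T$ with diagonalizable quotient) and is by definition $L_r(\lambda)$; a count against the classification of simple $G_r$-modules by $X_r(T)$ closes this. For (e), the input I would invoke is that for restricted $\lambda$ the simple $G$-module $L(\lambda)$ remains simple on restriction to $G_r$ (the Frobenius-kernel restriction theorem, $L(\lambda)\vert_{G_r} = L_r(\lambda)$). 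Granting this, $L(\lambda)\vert_{G_r T}$ is simple over $G_r$ and hence over $G_r T$, with $T$ acting compatibly and highest weight $\lambda$ of multiplicity one, so by (b) it is $\widehat{L}_r(\lambda)$. The genuinely nontrivial ingredient is exactly this restriction theorem underlying (c) and (e); this is where the reductive structure of $G$ and the linkage theory for Frobenius kernels are really used, whereas (a), (b) and (d) are formal consequences of the product decomposition of $G_r T$ and Frobenius reciprocity. As all of this is carried out in detail in \cite[II.9]{Jantz1}, I would in the end cite it rather than reproduce the linkage arguments.
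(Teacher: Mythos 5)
The paper offers no argument of its own for this proposition: it is a verbatim collection of statements from \cite[II.9.6]{Jantz1}, and your concluding move of citing Jantzen is exactly what the paper does. Your sketch of the underlying development is also essentially the one in \cite[II.9]{Jantz1} for parts (a), (b), (d) and (e): baby Verma modules $\widehat{Z}_r(\lambda)$ induced from $B_r T$, the dimension count $p^{rN}$ via the decomposition $G_r T \cong U_r^- \times B_r T$, extraction of a maximal weight plus Frobenius reciprocity for the parametrization, the formal Frobenius-twist argument for (d), and Curtis's restriction theorem $L(\lambda)\vert_{G_r} \cong L_r(\lambda)$ for $\lambda \in X_r(T)$ as the genuinely nontrivial input to (e).

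There is, however, one step in your treatment of (c) that would fail at the proposition's stated level of generality. You reduce (c) to the case $\lambda \in X_r(T)$ by invoking (d), but this presupposes that $X_r(T)$ contains a set of representatives of $X(T)/p^r X(T)$, which requires the derived subgroup of $G$ to be simply connected --- a hypothesis the proposition does not make, and which the paper deliberately postpones (it appears only in \ref{cor: criterion for simple modules with Phi and d}), precisely because the results are later applied to groups such as the connected component of $GO_{2l+1}$, whose derived subgroup $SO_{2l+1}$ is not simply connected. For the same reason your proposed ``count against the classification of simple $G_r$-modules by $X_r(T)$'' is off: in general the simple $G_r$-modules are classified by $X(T)/p^r X(T)$, not by $X_r(T)$. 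The correct argument for (c) is direct and needs neither the restriction theorem nor simple connectedness: for a simple $G_r T$-module $L$ of highest weight $\lambda$, the $G_r$-socle is $T$-stable (as $T$ normalizes $G_r$), hence equals $L$, and since $T$ is connected it stabilizes each $G_r$-isotypic component, so $L \vert_{G_r} \cong L_1^{\oplus m}$ is isotypic; each copy of $L_1$ contributes a line of $U_r$-invariants, and if $\dim L^{U_r} > 1$ one finds a $U_r$-invariant vector of weight $\mu < \lambda$ (the $\lambda$-weight space being one-dimensional by (a)), which generates via $\mathrm{Dist}(U_r^-)$ a proper nonzero submodule, a contradiction; hence $m = 1$ and $L \vert_{G_r} \cong L_r(\lambda)$ with $L_r(\lambda)$ labeled by $\lambda \bmod p^r X(T)$. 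Note also that Clifford theory for $G_r \trianglelefteq G_r T$ by itself only yields semisimplicity of the restriction, not simplicity, so this supplementary highest-weight argument is not optional even in the simply connected case.
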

As a motivation for our approach, consider the case where $G = \GL_n$ and $T \leq G$ is the maximal torus of diagonal matrices.\\
\indent According to \cite[Section 3]{DNP2}, the simple $G_r T$-module $\widehat{L_r}(\lambda)$ is a polynomial $G_r T$-module iff $\lambda \in P_r(D) + p^r P(D)$, where 
\begin{align*}
P_r(D) = \{ &\lambda \in P(D) \vert  0 \leq \lambda_i - \lambda_{i+1} \leq p^r -1 \text{ for } 1 \leq i \leq n-1,\\ &0 \leq \lambda_n \leq p^r -1\}.
\end{align*}
A character $\lambda \in X(T)$ belongs to $P(D)$ iff $\lambda_i \geq 0$ for $1 \leq i \leq n$, that is, iff $\min \{\lambda_i \vert 1 \leq i \leq n\} \in \N_0$ where $\N_0 = \N \cup \{0\}$. Taking this minimum is compatible with multiplication by natural numbers, in particular with multiplication by $p^r$.\\
\indent Given characters $\lambda, \mu \in X(T)$, we can of course find a permutation $w \in W \cong S_n$ such that this minimum is attained at the same coordinate for $w.\lambda$ and $\mu$, so that 
\begin{equation*}
 \min\{(w.\lambda+\mu)_i \vert 1 \leq i \leq n\} = \min \{\lambda_i \vert 1 \leq i \leq n\} + \min \{\mu_i \vert 1 \leq i \leq n\}.
\end{equation*}
Writing $d = \det \vert_{T} = (1, \ldots, 1)$, we have 
\begin{equation*} 
X_0(T) = \langle d \rangle, \quad \min\{ d_i \vert 1 \leq i \leq n\} = 1
\end{equation*}
and we can write 
\begin{equation*}
P_r(D) = \{ \lambda \in X_r(T) \cap P(D) \vert \lambda - p^r d \notin P(D)\}. 
\end{equation*}
We axiomatize these properties of $\det \vert_T$ and the function $X(T) \rightarrow \Z, \lambda \mapsto \min\{\lambda_i \vert 1 \leq i \leq n\}$ for general $G$ in the following

\begin{assumption}
Suppose there exists a function $\varphi: X(T) \rightarrow \Z^l$ with the following properties:
\begin{enumerate}[(1)]
\item $\forall \lambda \in X(T): \varphi(\lambda) \in \N_0^l \Longleftrightarrow \lambda \in P(D)$,
\item $\forall \lambda \in X(T):\varphi(p^r \lambda) =  p^r \varphi(\lambda)$,
\item $\forall \lambda, \lambda' \in X(T): \exists w \in W: \varphi(w.\lambda + \lambda') = \varphi(\lambda) + \varphi(\lambda')$,
\item $\varphi \vert_{X_0(T)}: X_0(T) \rightarrow \Z^l$ is bijective.
\end{enumerate}
\label{assumption: existence of function}
\end{assumption}

Since $W$ acts trivially on $X_0(T)$, $(1),(3)$ and $(4)$ imply that the restriction to $X_0(T)$ of any function $\varphi$ as in \ref{assumption: existence of function} is an isomorphism mapping $P(D)\cap X_0(T)$ to $\N_0^l$.\\
\indent It will turn out that \ref{assumption: existence of function} is sufficient for a classification similar to the case $G = \GL_n$. We first show that such a function is essentially unique.

\begin{proposition}
If  $\varphi, \psi: X(T) \rightarrow \Z^l$ are functions as in \ref{assumption: existence of function}, there is a permutation of coordinates $\sigma: \Z^l \rightarrow \Z^l$ such that $\varphi = \sigma \circ \psi$. 
\label{prop: unicity of phi}
\end{proposition}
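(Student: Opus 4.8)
The plan is to show that any $\varphi$ satisfying properties (1)--(4) of \ref{assumption: existence of function} is \emph{uniquely determined} by the monoid $P(D)$ together with the identification of $X_0(T)$ with $\Z^l$ it induces, and that two such identifications can only differ by a permutation of coordinates. The crucial first step is to upgrade the ``there exists $w$'' in property (3) to an honest additivity statement. The point is that $W$ acts trivially on $X_0(T)$, so applying (3) with the \emph{first} argument $\mu \in X_0(T)$ produces some $w \in W$ with $\varphi(w.\mu + \lambda) = \varphi(\mu) + \varphi(\lambda)$, and there $w.\mu = \mu$. Hence
\[
\varphi(\mu + \lambda) = \varphi(\mu) + \varphi(\lambda) \qquad (\mu \in X_0(T),\ \lambda \in X(T)).
\]
Taking $\lambda = \mu = 0$ gives $\varphi(0) = 0$, and the displayed identity shows $\varphi\vert_{X_0(T)}$ is additive; combined with (4) it is a group isomorphism $X_0(T) \to \Z^l$, which by (1) carries $P(D) \cap X_0(T)$ onto $\N_0^l$ (this is the remark preceding the statement).

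Next I would pin down $\varphi$ on all of $X(T)$. Let $\delta_j \in X_0(T)$ be the preimage of the standard basis vector $e_j$, so $\{\delta_j\}$ is a $\Z$-basis of $X_0(T)$ with $\varphi(\delta_j) = e_j$. For $m \in \Z^l$ the additivity identity gives $\varphi(\lambda - \sum_j m_j \delta_j) = \varphi(\lambda) - m$, whence by (1)
\[
\varphi(\lambda) \geq m \text{ (coordinatewise)} \iff \lambda - \textstyle\sum_j m_j \delta_j \in P(D).
\]
The left-hand side describes exactly the down-set $\{m \in \Z^l \vert m \leq \varphi(\lambda)\}$, of which $\varphi(\lambda)$ is the maximum; so $\varphi(\lambda)$ is completely determined by $P(D)$ and the basis $\{\delta_j\}$.

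Finally I would compare $\varphi$ and $\psi$. Both restrict to group isomorphisms $X_0(T) \to \Z^l$ carrying $P(D) \cap X_0(T)$ onto $\N_0^l$, so $\sigma := (\varphi\vert_{X_0(T)}) \circ (\psi\vert_{X_0(T)})^{-1}$ is an automorphism of $\Z^l$ preserving the monoid $\N_0^l$; since it and its inverse map $\N_0^l$ into itself, it must permute the indecomposable elements $e_1, \dots, e_l$, i.e.\ $\sigma$ is a permutation of coordinates. One checks that $\sigma \circ \psi$ again satisfies (1)--(4) (permuting coordinates preserves $\N_0^l$ and commutes with everything in sight), and that it agrees with $\varphi$ on $X_0(T)$, hence has the same distinguished basis $\{\delta_j\}$. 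Applying the characterization above to the common $P(D)$ and $\{\delta_j\}$ forces $\varphi = \sigma \circ \psi$.

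I expect the only genuinely delicate point to be the passage from property (3) to the additivity identity: everything rests on recognising that putting a central weight in the first slot makes $w$ act trivially, turning an existential statement into an equation. Once that identity and the resulting membership criterion are in hand, the remaining ingredients (a monoid automorphism of $\N_0^l$ is a coordinate permutation; a point of $\Z^l$ is recovered from its down-set) are routine.
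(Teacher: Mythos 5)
Your proof is correct and follows essentially the same route as the paper's: both upgrade property (3) to genuine additivity $\varphi(\mu+\lambda)=\varphi(\mu)+\varphi(\lambda)$ for $\mu \in X_0(T)$ using the trivial $W$-action, both identify the two distinguished bases of $X_0(T)$ up to a permutation via the monoid isomorphism $P(D)\cap X_0(T)\cong \N_0^l$, and both then use (1) to pin down the remaining values. The only difference is presentational: the paper normalizes $\lambda$ to $\tilde{\lambda}=\lambda-\sum_i \varphi(\lambda)_i d_i$ and shows directly that $\psi(\tilde{\lambda})=0$ (since $\tilde{\lambda}\in P(D)$ but $\tilde{\lambda}-d_i\notin P(D)$ for all $i$), whereas you package the identical computation as the observation that $\varphi(\lambda)$ is the maximum of the down-set $\lbrace m\in\Z^l \mid \lambda-\sum_j m_j\delta_j \in P(D)\rbrace$, so that $\varphi$ is determined by $P(D)$ and the basis.
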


\begin{proof}
Let $e_1, \ldots, e_l$ be the standard basis of $\Z^l$ and $d_1, \ldots, d_l$ resp. $d'_1, \ldots, d'_l$ be the preimages of the $e_i$ in $X_0(T)$ for $\varphi$ resp. $\psi$. Then every element of $X_0(T) \cap P(D)$ can be written as a linear combination in the $d_i$ with non-negative coefficients. Writing the $d'_i$ as such a linear combination and then writing the $d_i$ as such a linear combination in the $d'_i$, we see that the base change matrix for the two bases is a permutation matrix. Thus, there is $\sigma \in S_l$ such that $d_i = d'_{\sigma(i)}$ for $1 \leq i \leq l$.\\
\indent Now let $\lambda \in X(T)$ and set $\tilde{\lambda} = \lambda - \sum_{i=1}^l \varphi(\lambda)_i d_i$. Using $(3)$ and that $W$ acts trivially on $X_0(T)$ for the first equality and the fact that $\psi \vert_{X_0(T)}$ is an isomorphism for the second equality, we get 
\begin{align*}
\psi(\tilde{\lambda}) &= \psi(\lambda) + \psi(- \sum_{i=1}^l \varphi(\lambda)_i d_i)= \psi(\lambda) - \sum_{i=1}^l \varphi(\lambda)_i \psi(d_i) \\
&= \psi(\lambda) - \sum_{i=1}^l \varphi(\lambda)_{i} \psi(d'_{\sigma(i)}) = \psi(\lambda)-\sum_{i=1}^l \varphi(\lambda)_i e_{\sigma(i)}.
\end{align*} 
Applying the same arguments to $\varphi$, we get $\varphi(\tilde{\lambda}) = 0$ and $\varphi(\tilde{\lambda} - d_i) = -e_i$ for $1 \leq i \leq l$. Thus, $\tilde{\lambda}\in P(D)$ and $\tilde{\lambda}-d_i = \tilde{\lambda} - d'_{\sigma(i)} \notin P(D)$ for $1 \leq i \leq l$, so that $\psi(\tilde{\lambda}) \in \N_0^l$ and $\psi(\tilde{\lambda}) - e_i \notin \N_0^l$. This shows $\psi(\tilde{\lambda}) = 0$, so that $\psi(\lambda)= \sum_{i=1}^l \varphi(\lambda)_i e_{\sigma(i)}$, hence $\varphi(\lambda)_i = \psi(\lambda)_{\sigma(i)}$.    
\end{proof}

\begin{definition}
Suppose $\varphi$ is a function as in \ref{assumption: existence of function} for $G$. Letting $e_1, \ldots, e_l$ be the standard basis of $\Z^l$ and $d_1, \ldots, d_l$ be the preimages of the $e_i$ in $X_0(T)$ with respect to $\varphi$, we set \begin{equation*}
P_r(D) = \lbrace \lambda \in P(D) \cap X_r(T) \vert \forall i \in \{1, \ldots, l\}: \lambda - p^r d_i \notin P(D) \rbrace.
\end{equation*}
\end{definition}

It follows from the proof of \ref{prop: unicity of phi} that $P_r(D)$ does not depend on the choice of $\varphi$ or the $d_i$. By the remarks preceding \ref{assumption: existence of function}, we recover the original definition of $P_r (D)$ in the case $G = \GL_n$.

\begin{lemma}
Let $\lambda = \lambda_0 + p^r \tilde{\lambda}= \lambda_0' + p^r \tilde{\lambda'}$ with $\lambda_0, \lambda_0' \in X_r(T), \tilde{\lambda}, \tilde{\lambda'} \in X(T)$. Then there is $\mu \in X_0(T)$ such that $\lambda_0 = \lambda_0' + p^r \mu, \tilde{\lambda}' = \tilde{\lambda} + \mu$.
\label{lem: decompositions of lambda differ by an element of X_0(T)}
\end{lemma}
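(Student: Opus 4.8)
The plan is to subtract the two given decompositions of $\lambda$ and show that the resulting difference lies in $X_0(T)$. From $\lambda_0 + p^r \tilde{\lambda} = \lambda_0' + p^r \tilde{\lambda'}$ I would set $\mu = \tilde{\lambda'} - \tilde{\lambda} \in X(T)$. Then $\tilde{\lambda'} = \tilde{\lambda} + \mu$ holds by definition, and rearranging the identity gives $\lambda_0 - \lambda_0' = p^r \mu$, i.e. $\lambda_0 = \lambda_0' + p^r \mu$. Thus both required equalities hold for this single candidate $\mu$, and the entire content of the lemma reduces to verifying that $\mu \in X_0(T)$.

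To prove $\mu \in X_0(T)$, I would pair the identity $\lambda_0 - \lambda_0' = p^r \mu$ with the simple coroots. For each $\alpha \in S$ this yields $\langle \lambda_0, \alpha^{\vee}\rangle - \langle \lambda_0', \alpha^{\vee}\rangle = p^r \langle \mu, \alpha^{\vee}\rangle$. Since $\lambda_0, \lambda_0' \in X_r(T)$, both $\langle \lambda_0, \alpha^{\vee}\rangle$ and $\langle \lambda_0', \alpha^{\vee}\rangle$ lie in $\{0, 1, \ldots, p^r - 1\}$, so their difference has absolute value at most $p^r - 1$, strictly less than $p^r$. As the left-hand side is a multiple of $p^r$ of absolute value less than $p^r$, it must vanish, forcing $\langle \mu, \alpha^{\vee}\rangle = 0$ for every $\alpha \in S$.

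It then remains to upgrade this vanishing from the simple roots to all of $R$, which is the defining condition for membership in $X_0(T)$. This is handled by the standard fact that the simple coroots $\{\alpha^{\vee} \mid \alpha \in S\}$ generate the full coroot lattice over $\Z$; hence every coroot $\beta^{\vee}$ with $\beta \in R$ is a $\Z$-linear combination of simple coroots, and $\langle \mu, \beta^{\vee}\rangle = 0$ follows by linearity. This gives $\mu \in X_0(T)$ and completes the argument. The only real crux is the elementary squeeze in the middle step: one must use that the bounds defining $X_r(T)$ have spread exactly $p^r - 1$, just tight enough to trap the multiple of $p^r$ at zero; everything else is formal bookkeeping, so I expect no serious obstacle beyond being careful that $\lambda_0$ and $\lambda_0'$ are both restricted.
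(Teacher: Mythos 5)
Your proof is correct and takes essentially the same route as the paper: set $\mu = \tilde{\lambda}' - \tilde{\lambda}$, pair $\lambda_0 - \lambda_0' = p^r \mu$ with coroots, and squeeze the difference of the two restricted pairings between $-(p^r-1)$ and $p^r-1$ to force $\langle \mu, \alpha^{\vee}\rangle = 0$. If anything you are slightly more careful than the paper, whose proof invokes the restricted bounds ``for all $\alpha \in R$'' although the definition of $X_r(T)$ only provides them for $\alpha \in S$; your final step, extending the vanishing from simple coroots to all coroots via $\Z$-linear combinations, supplies exactly the detail the paper leaves implicit.
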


\begin{proof}
We have $\lambda_0 - \lambda_0' = p^r \mu$, where $\mu = \tilde{\lambda}' - \tilde{\lambda}$. For all $\alpha \in R$, we have
\begin{equation*}
p^r \langle\mu, \alpha^\vee\rangle = \langle\lambda_0 - \lambda_0', \alpha^\vee\rangle = \langle\lambda_ 0, \alpha^\vee \rangle -  \langle\lambda_0', \alpha^\vee\rangle.
\end{equation*}
As $\lambda_0, \lambda_0' \in X_r(T)$, we have
\begin{equation*} 
-(p^r - 1) \leq \langle\lambda_ 0, \alpha^\vee \rangle -  \langle\lambda_0', \alpha^\vee\rangle \leq p^r -1, 
\end{equation*}
forcing $\langle\mu, \alpha^\vee\rangle = 0$. Thus, $\mu \in X_0(T)$ and the result follows.
\end{proof}

\begin{theorem}
Suppose \ref{assumption: existence of function} holds for $G$.
If $\lambda \in X_r(T) + p^r X(T)$, then there is a unique decomposition $\lambda = \lambda_0 + p^r \tilde{\lambda}$ with $\lambda_0 \in P_r(D), \tilde{\lambda} \in X(T)$. Furthermore, if all weights of the simple $G_r T$-module $\widehat{L_r}(\lambda)$ belong to $P(D)$, then $\lambda \in P_r(D) + p^r P(D)$.
\label{thm: all weights of simple modules polynomial iff weight belongs to Xr(D)+p^r P(D)}
\end{theorem}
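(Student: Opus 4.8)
The plan is to translate every condition into coordinatewise (in)equalities for $\varphi$ and then reduce to elementary arithmetic modulo $p^r$. The computational engine is the additivity of $\varphi$ along $X_0(T)$: I would first record that for every $\nu \in X(T)$ and every $c \in X_0(T)$ one has $\varphi(\nu + c) = \varphi(\nu) + \varphi(c)$. Property $(3)$ of \ref{assumption: existence of function} already gives a $w \in W$ with $\varphi(w.\nu + c) = \varphi(\nu) + \varphi(c)$, and since $c$ is fixed by $W$ we have $w.\nu + c = w.(\nu + c)$; hence it suffices to know that $\varphi$ is $W$-invariant. The latter I would deduce from the ($W$-stable) geometry of $D = \overline{T}$: applying $(3)$ to $\lambda$ and $-\sum_i k_i d_i$ shows that $\varphi(\lambda) \geq (k_1, \dots, k_l)$ componentwise if and only if $\lambda - \sum_i k_i d_i \in P(D)$, a characterization manifestly invariant under the $W$-action on $P(D)$ and the (fixed) $d_i$, so that $\varphi(w.\lambda) = \varphi(\lambda)$. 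As a byproduct this identifies $P_r(D) = \{\lambda \in X_r(T) \mid 0 \leq \varphi(\lambda)_i \leq p^r - 1 \text{ for all } i\}$, using $\varphi(\lambda - p^r d_i) = \varphi(\lambda) - p^r e_i$.

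For the existence and uniqueness of $\lambda_0 \in P_r(D)$, I would start from any decomposition $\lambda = \lambda_1 + p^r \nu$ with $\lambda_1 \in X_r(T)$, which exists as $\lambda \in X_r(T) + p^r X(T)$. By \ref{lem: decompositions of lambda differ by an element of X_0(T)} every other such decomposition replaces $\lambda_1$ by $\lambda_1 + p^r \mu$ for some $\mu \in X_0(T)$, and $\lambda_1 + p^r\mu$ is still $p^r$-restricted since $\mu$ pairs to zero with every coroot. Additivity together with $\varphi(p^r\mu) = p^r \varphi(\mu)$ yields $\varphi(\lambda_1 + p^r\mu) = \varphi(\lambda_1) + p^r\varphi(\mu)$, and since $\varphi\vert_{X_0(T)}$ is a bijection onto $\Z^l$ I may prescribe $\varphi(\mu) \in \Z^l$ freely. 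Choosing $\varphi(\mu)_i = -\lfloor \varphi(\lambda_1)_i / p^r \rfloor$ brings each coordinate of $\varphi(\lambda_0)$, with $\lambda_0 := \lambda_1 + p^r\mu$, into $\{0, \dots, p^r - 1\}$, i.e. $\lambda_0 \in P_r(D)$. Uniqueness then follows from the characterization of $P_r(D)$: two admissible $\lambda_0$ differ by $p^r\mu$ with $\mu \in X_0(T)$, so their $\varphi$-values agree modulo $p^r$ while both lie in $\{0, \dots, p^r-1\}^l$, forcing $\varphi(\mu) = 0$ and hence $\mu = 0$.

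For the second assertion I would exploit the full Weyl orbit of the highest weight. Writing $\lambda = \lambda_0 + p^r\tilde{\lambda}$ with $\lambda_0 \in P_r(D) \subseteq X_r(T)$, Proposition \ref{prop: properties of the Lr(lambda)}(d),(e) give $\widehat{L_r}(\lambda) \cong \widehat{L_r}(\lambda_0) \otimes p^r\tilde{\lambda}$ with $\widehat{L_r}(\lambda_0) \cong L(\lambda_0)\vert_{G_rT}$; since the weight set of $L(\lambda_0)$ is $W$-stable and contains $\lambda_0$, every $w.\lambda_0 + p^r\tilde{\lambda}$ is a weight of $\widehat{L_r}(\lambda)$ and hence lies in $P(D)$ by hypothesis. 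I would then apply property $(3)$ to $\lambda_0$ and $p^r\tilde{\lambda}$ to obtain the single $w$ with $\varphi(w.\lambda_0 + p^r\tilde{\lambda}) = \varphi(\lambda_0) + p^r\varphi(\tilde{\lambda})$; the left-hand side is $\geq 0$ because that element belongs to $P(D)$, so $\varphi(\lambda_0)_i + p^r\varphi(\tilde{\lambda})_i \geq 0$ for all $i$. Combined with $0 \leq \varphi(\lambda_0)_i \leq p^r - 1$ this forces $\varphi(\tilde{\lambda})_i > -1$, whence $\varphi(\tilde{\lambda}) \in \N_0^l$ and $\tilde{\lambda} \in P(D)$, giving $\lambda \in P_r(D) + p^r P(D)$.

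The main obstacle is the first paragraph, namely securing additivity of $\varphi$ along $X_0(T)$ — equivalently the $W$-invariance of $\varphi$ — since property $(3)$ only supplies an unspecified Weyl element. The resolution is that this element becomes harmless once one pins down the target coordinatewise and invokes the $W$-invariance of $P(D)$. By contrast the second assertion is comparatively soft: there the existential $w$ in $(3)$ causes no trouble precisely because the whole orbit $\{w.\lambda_0 + p^r\tilde{\lambda} \mid w \in W\}$ consists of weights, so whichever $w$ the axiom returns already produces a value in $P(D)$.
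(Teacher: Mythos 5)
Your proof is correct and, at the level of the theorem itself, takes the same route as the paper: you obtain existence and uniqueness of the decomposition by adjusting $\lambda_0$ within its $p^r X_0(T)$-coset using Lemma \ref{lem: decompositions of lambda differ by an element of X_0(T)} together with properties $(1)$--$(4)$, and you prove the second assertion by observing that every $w.\lambda_0 + p^r\tilde{\lambda}$ is a weight of $\widehat{L_r}(\lambda) \cong \widehat{L_r}(\lambda_0)\otimes_k p^r\tilde{\lambda}$ and feeding the pair $(\lambda_0, p^r\tilde{\lambda})$ into property $(3)$. That you conclude directly (forcing $\varphi(\tilde{\lambda})_i > -1$) where the paper argues by contraposition is immaterial, and your explicit normalization $\varphi(\mu)_i = -\lfloor \varphi(\lambda_1)_i/p^r\rfloor$ merely makes precise the paper's terse ``there is a unique $a \in \Z^l$''.

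The one genuine divergence is your first paragraph, where you make the foundational step harder than it is. To obtain the additivity $\varphi(\nu+c) = \varphi(\nu)+\varphi(c)$ for $c \in X_0(T)$ you pass through full $W$-invariance of $\varphi$, which you deduce from the characterization $\varphi(\lambda)\ge (k_1,\dots,k_l) \Leftrightarrow \lambda - \sum_i k_i d_i \in P(D)$ combined with $W$-stability of $P(D)$. The latter is true (conjugation by a representative of $w \in W$ preserves $\overline{T}$, hence $P(D)$, and $w$ fixes each $d_i$), but it is neither part of Assumption \ref{assumption: existence of function} nor proved anywhere in the paper, so as written you import an unverified geometric input. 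It is also unnecessary: apply $(3)$ with the $X_0(T)$-element placed in the argument on which $w$ acts; since $W$ fixes $X_0(T)$ pointwise, $\varphi(c+\nu) = \varphi(w.c+\nu) = \varphi(c)+\varphi(\nu)$ for whichever $w$ the axiom returns. This is exactly the move made in the paper's proof of Proposition \ref{prop: unicity of phi}, and with this one-line replacement your ``main obstacle'' dissolves entirely --- nothing else in your argument uses $W$-invariance of $\varphi$ beyond this additivity.
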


\begin{proof}
Let $\lambda = \lambda_0 + p^r \tilde{\lambda}$ with $\lambda_0 \in X_r(T), \tilde{\lambda} \in X(T)$.
As $d_i \in X_0(T)$, we get that for each $a \in \Z^l$, $\lambda_0 + p^r \sum_{i=1}^l a_i d_i + p^r (\tilde{\lambda} - \sum_{i=1}^l a_i d_i)$ is another decomposition such that $\lambda_0 +  p^r \sum_{i=1}^l a_i d_i \in X_r(T)$. Since $d_1, \ldots, d_l$ generate $X_0(T)$, \ref{lem: decompositions of lambda differ by an element of X_0(T)} shows that every decomposition of $\lambda$ arises in this fashion. By $(1), (3), (4)$ and since $W$ acts trivially on $X_0(T)$, there is a unique $a \in \Z^l$ such that $\lambda_0 +  p^r \sum_{i=1}^l a_i d_i \in P_r(D)$. Thus, there is a unique decomposition $\lambda = \lambda_0 + p^r \tilde{\lambda}$ such that $\lambda_0 \in P_r (D)$. \\
\indent We show by contraposition that if all weights of $\widehat{L_r}(\lambda)$ are polynomial, then $\tilde{\lambda} \in P(D)$. Suppose that $\tilde{\lambda} \notin P(D)$, so that $\varphi(\tilde{\lambda}) \notin \N_0^l$. Then there is $i \in \{1, \ldots, l\}$ such that $\varphi(\tilde{\lambda})_i < 0$. Since $\lambda_0 - p^r d_i \notin P(D)$, $(1),(3),(4)$ yield $\varphi(\lambda_0)_i < p^r$. Using $(2)$ and $(3)$, we see that there is $w \in W$ such that \begin{equation*}
\varphi(w. \lambda_0 + p^r \tilde{\lambda})_i = \varphi(\lambda_0)_i + p^r \varphi(\tilde{\lambda})_i < 0,
\end{equation*}
so that $w. \lambda_0 + p^r \tilde{\lambda} \notin P(D)$ by $(1)$. \\
\indent Since $\widehat{L_r}(\lambda_0)$ lifts to a simple $G$-module by \ref{prop: properties of the Lr(lambda)}, its character is $W$-invariant, so that $w. \lambda_0$ is a weight of $\widehat{L_r}(\lambda_0)$ and $w. \lambda_0 + p^r \tilde{\lambda}$ is a weight of $\widehat{L_r}(\lambda_0 + p^r \tilde{\lambda}) \cong \widehat{L_r}(\lambda_0) \otimes_k p^r \tilde{\lambda}$, so that not all weights of $\widehat{L_r}(\lambda)$ are polynomial.
\end{proof}

For $G = \GL_n$, it was shown in the Appendix of \cite{Nak1} that a $G_r T$-module such that all of its weights are polynomial is a polynomial $G_r T$-module. It is not known for which general $G$ a similar statement holds. However, if $M$ is normal as a variety, we can at least prove this for some simple modules. 

\begin{proposition}
Suppose that $M$ is a normal variety. If $\lambda = \lambda_0 + p^r \tilde{\lambda} \in (X_r(T)\cap P(D)) + p^r P(D)$, the simple $G_r T$-module $\widehat{L}_r(\lambda)$ is a polynomial $G_r T$-module.  
\label{prop: M normal => certain simple GrT-modules are polynomial}
\end{proposition}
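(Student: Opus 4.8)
The plan is to reduce to the two tensor factors in the isomorphism $\widehat{L}_r(\lambda)\cong\widehat{L}_r(\lambda_0)\otimes_k p^r\tilde{\lambda}$ of Proposition~\ref{prop: properties of the Lr(lambda)}(d) and to show that each is a polynomial $G_rT$-module. Since $A(G_rT)$ is a subbialgebra of $k[G_rT]$, a tensor product of comodules factoring through $A(G_rT)$ again factors through $A(G_rT)$; thus it suffices to treat the two factors separately.

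For the factor $p^r\tilde{\lambda}$, I would use that $\tilde{\lambda}\in P(D)=X(D)$ is a group-like element of $A(D)=k[D]$. As $\overline{G_rT}\subseteq M_rD=(F^r)^{-1}(D)$, the $r$-th Frobenius maps $\overline{G_rT}$ into $D$ and hence induces a bialgebra map $A(D)\to A(G_rT)=k[\overline{G_rT}]$ sending group-likes to group-likes. The image of $\tilde{\lambda}$ is a one-dimensional $A(G_rT)$-comodule, so a polynomial $G_rT$-module; restricted to $T$ it is the character $t\mapsto\tilde{\lambda}(t)^{p^r}$, while $G_r$ acts trivially because $F^r$ annihilates $G_r$. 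This is precisely the module $p^r\tilde{\lambda}$.

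For the factor $\widehat{L}_r(\lambda_0)$, Proposition~\ref{prop: properties of the Lr(lambda)}(e) gives $\widehat{L}_r(\lambda_0)\cong L(\lambda_0)\vert_{G_rT}$ because $\lambda_0\in X_r(T)$, so it is enough to prove that the simple $G$-module $L(\lambda_0)$ is a polynomial $G$-module, i.e. a rational $M$-module: the surjection $A(G)\twoheadrightarrow A(G_rT)$ induced by the inclusion $G_rT\hookrightarrow G$ then exhibits its restriction as a polynomial $G_rT$-module. Since $\lambda_0\in X_r(T)$ is dominant and lies in $P(D)$, this is exactly where normality of $M$ enters: for a normal reductive monoid the simple rational $M$-modules are precisely the $L(\lambda)$ with $\lambda$ dominant and $\lambda\in P(D)$ (see \cite{Ren1}), whence $L(\lambda_0)$ extends to $M$.

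I expect this last step to be the only non-formal one. The mechanism behind it is that normality forces $P(D)$ to be saturated, i.e. to equal the set of lattice points of the rational polyhedral cone it generates; since the character of $L(\lambda_0)$ is $W$-invariant, all weights of $L(\lambda_0)$ lie in the $W$-stable convex hull of $W\lambda_0$, hence in that cone and therefore in $P(D)$, which is what makes $L(\lambda_0)$ a rational $M$-module. The remaining verifications—closure of polynomial modules under tensor products and the behaviour of $A(-)$ under the restriction $A(G)\twoheadrightarrow A(G_rT)$—are routine once $A(G)$ and $A(G_rT)$ are identified with the coordinate rings of $M$ and $\overline{G_rT}$.
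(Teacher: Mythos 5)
Your proof is correct and follows essentially the same route as the paper: the same decomposition $\widehat{L}_r(\lambda)\cong\widehat{L}_r(\lambda_0)\otimes_k p^r\tilde{\lambda}$, the same Frobenius/restriction argument along $\overline{G_rT}\subseteq M_rD=(F^r)^{-1}(D)$ for the twist $p^r\tilde{\lambda}$, and the same use of normality of $M$ to lift $L(\lambda_0)$ to $M$ for a dominant weight $\lambda_0\in P(D)$. The only cosmetic difference is that you cite Renner for the extension of $L(\lambda_0)$ where the paper invokes \cite[3.5]{Doty2}; these are the same result, and your spelled-out mechanism (saturation of $P(D)$ plus $W$-invariance of the character) is a correct account of why it holds.
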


\begin{proof}
We have $\widehat{L}_r(\lambda) \cong \widehat{L_r}(\lambda_0) \otimes_k p^r \tilde{\lambda}$ and $\widehat{L_r}(\lambda_0)$ lifts to $G$ by \ref{prop: properties of the Lr(lambda)}. Since $M$ is normal and $\lambda_0$ is a dominant weight contained in $P(D)$, $\widehat{L_r}(\lambda_0)$ lifts to $M$ by \cite[3.5]{Doty2}, so that $\widehat{L}_r(\lambda_0)$ is a polynomial $G_r T$-module.\\
\indent As $\tilde{\lambda}$ is a $D$-module, $p^r \tilde{\lambda}$ is an $F^{-r} (D)$-module, where $F^r$ is the $r$-th iteration of the Frobenius morphism, and thus a polynomial $G_r T$-module by restriction. Hence $\widehat{L}_r(\lambda)$ is a polynomial $G_r T$-module. 
\end{proof}

\begin{corollary}
Let $M$ be normal and suppose that \ref{assumption: existence of function} holds for $G$.
If $\lambda \in X_r(T) + p^r X(T)$, then the simple module $\widehat{L_r}(\lambda)$ is a polynomial $G_r T$-module iff $\lambda \in P_r(D) + p^r P(D)$.
\end{corollary}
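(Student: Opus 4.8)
The plan is to prove the two implications separately, leveraging Proposition~\ref{prop: M normal => certain simple GrT-modules are polynomial} for the ``if'' direction and Theorem~\ref{thm: all weights of simple modules polynomial iff weight belongs to Xr(D)+p^r P(D)} for the ``only if'' direction. The normality hypothesis on $M$ enters only through the former; the latter already presupposes \ref{assumption: existence of function}, which is exactly the standing hypothesis of the corollary.

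For the implication $\lambda \in P_r(D) + p^r P(D) \Rightarrow \widehat{L_r}(\lambda)$ polynomial, I would first observe that by definition $P_r(D) \subseteq X_r(T) \cap P(D)$, so any $\lambda \in P_r(D) + p^r P(D)$ admits a decomposition $\lambda = \lambda_0 + p^r \tilde{\lambda}$ with $\lambda_0 \in X_r(T) \cap P(D)$ and $\tilde{\lambda} \in P(D)$. This is precisely the hypothesis of Proposition~\ref{prop: M normal => certain simple GrT-modules are polynomial}, which (using that $M$ is normal) gives at once that $\widehat{L_r}(\lambda)$ is a polynomial $G_r T$-module. So this direction is immediate.

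For the converse, suppose $\widehat{L_r}(\lambda)$ is a polynomial $G_r T$-module. The key step is to show that all weights of $\widehat{L_r}(\lambda)$ then lie in $P(D)$. This I would obtain by restricting the comodule structure along the inclusion $T \hookrightarrow G_r T$: since the structure map factors through $\widehat{L_r}(\lambda) \otimes A(G_r T)$, composing with the restriction $A(G_r T) \rightarrow k[T]$ shows that the induced $T$-comodule structure factors through $\widehat{L_r}(\lambda) \otimes A(D)$, where $A(D)$ is the image, i.e. the polynomial part of $k[T]$. As $A(D)$ is spanned by the character monoid $P(D)$ of $D = \overline{T}$, every weight of $\widehat{L_r}(\lambda)$ lies in $P(D)$. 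Once this is in hand, the final assertion of Theorem~\ref{thm: all weights of simple modules polynomial iff weight belongs to Xr(D)+p^r P(D)}, together with the hypothesis $\lambda \in X_r(T) + p^r X(T)$ built into the corollary, immediately yields $\lambda \in P_r(D) + p^r P(D)$.

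The only point requiring genuine care is the claim that a polynomial $G_r T$-module has all weights in $P(D)$; everything else is a direct citation of the two preceding results. I expect this step to be routine, resting on the identification of $A(D)$ with the image of $A(G_r T)$ under restriction to $T$ and on the fact that $P(D)$ is exactly the character monoid of $D = \overline{T}$, but it is the one ingredient not already packaged as a lemma in the preceding material, so it is where I would focus the written argument.
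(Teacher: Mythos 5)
Your proposal is correct and follows essentially the same route as the paper, whose proof is simply the combination of Theorem~\ref{thm: all weights of simple modules polynomial iff weight belongs to Xr(D)+p^r P(D)} (for the ``only if'' direction) and Proposition~\ref{prop: M normal => certain simple GrT-modules are polynomial} (for the ``if'' direction). The one step you elaborate --- that a polynomial $G_r T$-module has all weights in $P(D)$, via restricting the comodule map to $T$ and identifying the image of $A(G_r T)$ in $k[T]$ with $A(D)$, which is spanned by the group-like elements $P(D)$ --- is left implicit in the paper here (it is invoked explicitly only in the proof of Corollary~\ref{cor: criterion for simple modules with Phi and d}), and your argument for it is sound.
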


\begin{proof}
This follows directly from \ref{thm: all weights of simple modules polynomial iff weight belongs to Xr(D)+p^r P(D)} and \ref{prop: M normal => certain simple GrT-modules are polynomial}.
\end{proof}

\begin{corollary}
Let the derived subgroup of $G$ be simply connected and $M$ be normal. Suppose \ref{assumption: existence of function} holds for $G$. Then a system of representatives of isomorphism classes of simple polynomial $G_r T$-modules is given by $\lbrace \widehat{L_r}(\lambda) \vert \lambda \in P_r(D)+ p^r P(D)\rbrace$.
\label{cor: criterion for simple modules with Phi and d}
\end{corollary}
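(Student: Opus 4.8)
The plan is to reduce the classification to the preceding corollary, whose only limitation is that it treats weights lying in $X_r(T) + p^r X(T)$. By Proposition \ref{prop: properties of the Lr(lambda)}(b), every simple $G_r T$-module is isomorphic to exactly one $\widehat{L_r}(\lambda)$ with $\lambda \in X(T)$, so it suffices to determine precisely which $\lambda \in X(T)$ yield polynomial modules. The crucial reduction is to show that the hypothesis on the derived subgroup forces $X(T) = X_r(T) + p^r X(T)$; once this is established, every weight falls into the range covered by the preceding corollary.

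First I would establish $X(T) = X_r(T) + p^r X(T)$. Consider the homomorphism $\rho \colon X(T) \to \Z^S$, $\lambda \mapsto (\langle\lambda, \alpha^\vee\rangle)_{\alpha \in S}$. The torus $T \cap [G,G]$ is a maximal torus of the semisimple group $[G,G]$, and the coroots $\alpha^\vee$ for $\alpha \in S$ lie in its cocharacter group, so $\rho$ factors through the restriction $X(T) \to X(T \cap [G,G])$, which is surjective since $T \cap [G,G] \hookrightarrow T$ is a closed subtorus. As $[G,G]$ is simply connected, its fundamental weights map to the standard basis vectors of $\Z^S$, so $\rho$ is surjective. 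Given $\lambda \in X(T)$, I would write each $\langle\lambda, \alpha^\vee\rangle = p^r q_\alpha + s_\alpha$ with $0 \leq s_\alpha \leq p^r - 1$ and use surjectivity of $\rho$ to choose $\nu \in X(T)$ with $\langle\nu, \alpha^\vee\rangle = q_\alpha$ for all $\alpha \in S$. Then $\langle\lambda - p^r \nu, \alpha^\vee\rangle = s_\alpha \in [0, p^r-1]$, so $\lambda - p^r \nu \in X_r(T)$ and hence $\lambda \in X_r(T) + p^r X(T)$.

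With this in hand the corollary follows. Since $M$ is normal and \ref{assumption: existence of function} holds, the preceding corollary applies and shows that for $\lambda \in X(T) = X_r(T) + p^r X(T)$ the module $\widehat{L_r}(\lambda)$ is polynomial if and only if $\lambda \in P_r(D) + p^r P(D)$. Combined with the bijection of Proposition \ref{prop: properties of the Lr(lambda)}(b), this proves that $\{\widehat{L_r}(\lambda) \vert \lambda \in P_r(D) + p^r P(D)\}$ meets every isomorphism class of simple polynomial $G_r T$-modules, while the same bijection guarantees that distinct $\lambda$ in this set give non-isomorphic modules, so the list is irredundant.

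The step I expect to be the main obstacle is the surjectivity of $\rho$, that is, the precise translation of the simple connectedness of the derived subgroup into the decomposition $X(T) = X_r(T) + p^r X(T)$; this is where the new hypothesis is used, and it is exactly the ingredient needed to remove the restriction $\lambda \in X_r(T) + p^r X(T)$ from the preceding corollary. Everything else is bookkeeping built on top of that corollary and Proposition \ref{prop: properties of the Lr(lambda)}.
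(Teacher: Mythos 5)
Your proposal is correct and follows essentially the same route as the paper: the paper's proof likewise reduces everything to the observation that simple connectedness of the derived subgroup gives $X(T) = X_r(T) + p^r X(T)$ (asserted there without proof, where you supply the standard argument via the pairing with simple coroots), and then invokes Theorem \ref{thm: all weights of simple modules polynomial iff weight belongs to Xr(D)+p^r P(D)} together with Proposition \ref{prop: M normal => certain simple GrT-modules are polynomial}. The only point the paper makes explicit that you gloss over is that a polynomial module is a simple object of the polynomial category if and only if it is simple as a $G_r T$-module (so that every simple polynomial module is indeed some $\widehat{L_r}(\lambda)$), but this is routine and does not affect the argument.
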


\begin{proof}
Let $\lambda \in X(T)$. Since the derived subgroup of $G$ is simply connected, $X_r(T)$ contains a set of representatives of $X(T)/p^r X(T)$, so that we have $\lambda \in X_r(T)+ p^r X(T)$. \\
\indent By \ref{thm: all weights of simple modules polynomial iff weight belongs to Xr(D)+p^r P(D)}, all weights of $\widehat{L_r}(\lambda)$ are polynomial iff $\lambda \in P_r(D) + p^r P(D)$ and by \ref{prop: M normal => certain simple GrT-modules are polynomial}, $\widehat{L_r}(\lambda)$ is a polynomial $G_r T$-module in this case. Since all weights of polynomial $G_r T$-modules are polynomial and a polynomial module is a simple polynomial $G_r T$-module iff it is simple as a $G_r T$-module, the result follows.      
\end{proof}

We now give an explicit construction for a function as in \ref{assumption: existence of function} under certain assumptions.

\begin{theorem}
Suppose there is an action of $W$ on $X(T')$ such that the canonical projection $\pi: X(T') \rightarrow X(T)$ is $W$-equivariant and there are $b_1, \ldots, b_s \in \pi^{-1}(X_0(T)) \cap P(D')$ such that 
\begin{enumerate}[(a)]
\item $\forall i \in \{1, \ldots, s\},j \in \{1, \ldots, n\}: (b_i)_j \in \{ 0, 1\}$,
\item the sets $M_{i}=\{ j \in \{1, \ldots, n\} \vert (b_i)_j = 1\}$ form a partition of $\{1, \ldots, n\}$,
\item the image of $W$ in $S_{X(T')}$ is contained in the image of $W' \cong S_n$ and contains the subgroup $S_{M_1} \times \ldots \times S_{M_s}$, 
\item $\exists d_1, \ldots, d_l \in \{b_1, \ldots, b_s \}: \pi(d_1), \ldots, \pi(d_l)$ is a basis of  $X_0(T)$ such that each $\pi(b_i)$ is a linear combination of the $\pi(d_i)$ with non-negative coefficients. 
\end{enumerate}
Then there is a function $\varphi: X(T) \rightarrow \Z^l$ with the properties of \ref{assumption: existence of function}.
\label{thm: construction of phi} 
\end{theorem}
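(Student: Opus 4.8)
The plan is to lift the whole problem from $X(T)$ to the character group $X(T')$ of the ambient diagonal torus, where the polynomial weights are simply the nonnegative orthant and the required function is an honest blockwise minimum, and then to push that construction down along $\pi$. First I would pin down the polynomial weights. Since $T\subseteq T'$ gives a closed immersion $D=\overline{T}\subseteq\overline{T'}=D'$, the restriction $k[D']\twoheadrightarrow k[D]$ is surjective and compatible with $k[T']\twoheadrightarrow k[T]$; as $P(D')=\N_0^n$ for the diagonal monoid, passing to group-like elements yields $P(D)=\pi(P(D'))=\pi(\N_0^n)$. Thus $\lambda\in P(D)$ iff $\lambda$ admits a lift $\tilde\lambda\in\N_0^n$. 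On $X(T')$ I would then introduce the blockwise minimum $\Psi\colon X(T')\to\Z^s$, $\Psi(\mu)_i=\min_{t\in M_i}\mu_t$, for the partition $M_1,\dots,M_s$ from (b). By (a),(b) we have $\Psi(\mu)\in\N_0^s$ iff $\mu\in\N_0^n$, and $\Psi(p^r\mu)=p^r\Psi(\mu)$ is immediate; moreover, since (c) places $S_{M_1}\times\cdots\times S_{M_s}$ inside the image of $W$, for any $\mu,\mu'$ one can permute within each block to align minima and produce $w\in W$ with $\Psi(w.\mu+\mu')=\Psi(\mu)+\Psi(\mu')$. So $\Psi$ already satisfies analogues of \ref{assumption: existence of function}(1)--(3) upstairs, for the full partition.

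Next I would define $\varphi$ downstairs. Using that $\pi(d_1),\dots,\pi(d_l)$ is a basis of $X_0(T)$ (from (d)) and that each $\pi(d_j)=\pi(b_{i_j})\in P(D)$, I set $\varphi(\lambda)$ to be the componentwise-largest $a\in\Z^l$ with $\lambda-\sum_j a_j\pi(d_j)\in P(D)$; equivalently, via a lift $\tilde\lambda$, the largest $a$ with $\tilde\lambda-\sum_j a_j d_j\in\N_0^n+\ker\pi$. Thus $\varphi(\lambda)$ records, in the basis $\pi(d_j)$, the maximal central weight subtractable from $\lambda$ while remaining polynomial, and it is the downstairs shadow of $\Psi$: the $j$-th entry is controlled by the blockwise minima of a lift over the block $M_{i_j}$, optimized over lifts.

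The main obstacle is to show this maximal $a$ exists and is unique, i.e. that $R(\lambda)=\{a\mid\lambda-\sum_j a_j\pi(d_j)\in P(D)\}$ is a bounded, down-closed sublattice of $\Z^l$, hence has a top element. Down-closedness is clear since $\pi(d_j)\in P(D)$ and $P(D)$ is a monoid; boundedness holds because $0\in D$ (as $T$ contains the scalars) forces $P(D)$ to be pointed, so central weights cannot be subtracted indefinitely. Closure under componentwise maxima is the delicate point: here I would use that the chosen $d_j=b_{i_j}$ are $0/1$ vectors (a) with pairwise disjoint supports $M_{i_j}$ (b), so adjusting $a_j$ touches only block $M_{i_j}$; combined with the freedom to permute within blocks (c) and to modify a lift by $\ker\pi$, the blockwise-optimal subtractions can be realized simultaneously. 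The nonnegativity of the coefficients expressing the remaining $\pi(b_i)$ through the $\pi(d_j)$ (condition (d)) is exactly what makes these blockwise choices mutually consistent and forces the top element to be integral rather than merely rational. This is where I expect the bulk of the work, and where all of (a)--(d) get consumed.

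Finally I would verify \ref{assumption: existence of function}(1)--(4) for $\varphi$. Property (1) is immediate: $\varphi(\lambda)\in\N_0^l$ iff $0\in R(\lambda)$, i.e. iff $\lambda\in P(D)$. Property (2) holds because $P(D)$, $\ker\pi$ and the blockwise minima are all stable under multiplication by $p^r$, so $R(p^r\lambda)=p^rR(\lambda)$ and the top elements scale. Property (3) is inherited from the upstairs additivity of $\Psi$: lifting $\lambda,\lambda'$, choosing $w\in W$ aligning block minima, and using $W$-equivariance of $\pi$ to descend $\Psi(w.\tilde\lambda+\tilde\lambda')=\Psi(\tilde\lambda)+\Psi(\tilde\lambda')$ to $\varphi(w.\lambda+\lambda')=\varphi(\lambda)+\varphi(\lambda')$. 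For property (4) I would specialize to $X_0(T)$: by (c) a lift of $\mu\in X_0(T)$ may be taken block-constant, so $\mu=\sum_j m_j\pi(d_j)$ and the key computation $P(D)\cap X_0(T)=\pi(\N_0^n)\cap X_0(T)=\sum_j\N_0\,\pi(d_j)$ (backed by (d)) shows $R(\mu)=\{a\mid a_j\le m_j\}$, whence $\varphi(\mu)=(m_1,\dots,m_l)$; this is precisely the coordinate isomorphism $\varphi|_{X_0(T)}\colon X_0(T)\xrightarrow{\sim}\Z^l$.
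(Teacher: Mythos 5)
Your architecture genuinely mirrors the paper's (blockwise minima on $X(T')$, descent along $\pi$), and your preliminary reduction $P(D)=\pi(P(D'))=\pi(\N_0^n)$ is correct and implicitly used by the paper as well. But there is a real gap at the point you yourself flag as ``the bulk of the work'': you define $\varphi(\lambda)$ as the top element of $R(\lambda)=\{a\in\Z^l \mid \lambda-\sum_j a_j\pi(d_j)\in P(D)\}$ and never prove that this top element exists --- and that existence, together with its identification with the blockwise-minimum data, is the actual content of the theorem, not a verification to be deferred. Moreover, your checks of properties (2) and (3) already presuppose that identification: property (3) ``descends additivity of $\Psi$ to $\varphi$'', which is meaningless until you know $\varphi$ is computed by $\Psi$ on lifts; and for (2) the asserted set equality $R(p^r\lambda)=p^rR(\lambda)$ is false as stated ($R(p^r\lambda)$ is down-closed, $p^rR(\lambda)$ consists only of componentwise multiples of $p^r$), while the correct statement --- that the top elements scale by $p^r$ --- again needs the explicit formula. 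So as written the argument is circular at its crux.

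The missing enabling fact is the paper's Lemma \ref{lem: coordinates in Mi of elements of T'bot}: every $\mu\in\ker\pi$ is \emph{constant} on each block $M_i$, proved from (c), $W$-equivariance of $\pi$ and torsion-freeness of $X(T)$ via the transposition $(k\,l)$. You invoke ``the freedom to modify a lift by $\ker\pi$'', but without this lemma that freedom is uncontrolled, and your auxiliary claims $P(D)\cap X_0(T)=\sum_j\N_0\,\pi(d_j)$ and $R(\mu)=\{a\mid a_j\le m_j\}$ do not follow from (d) alone (you also cite (c) where (d) is what gives block-constant lifts of $X_0(T)$). With the lemma in hand everything closes up: any $\mu\in\ker\pi$ equals $\sum_i c_ib_i$ with $\sum_i c_in_{ij}=0$, where $\pi(b_i)=\sum_j n_{ij}\pi(d_j)$, $n_{ij}\in\N_0$ and $n_{i_jj'}=\delta_{jj'}$, and a short estimate using $n_{ij}\ge 0$ then shows that every $a\in R(\pi(\tilde\lambda))$ satisfies $a_j\le\sum_i n_{ij}\min\{\tilde\lambda_t\mid t\in M_i\}$ with equality attained --- so your top element exists and is exactly the paper's $\varphi$. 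The paper sidesteps the well-definedness problem entirely by taking this explicit formula as the \emph{definition} on $X(T')$, proving invariance under $\ker\pi$ via the lemma, and then checking (1)--(4) by direct computation. (Your boundedness argument via pointedness of $P(D)$ is also loose; it is cleaner to note that total degree descends to $X(T)$, since $Z(\GL_n)\subseteq T$ forces $\ker\pi$ into degree $0$, is nonnegative on $P(D)$, and $\pi(d_j)$ has degree $\vert M_{i_j}\vert>0$.) Your proposal is completable, but only after supplying precisely the lemma and the computation that constitute the paper's proof.
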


\begin{proof}
For every $i \in \{1, \ldots, s\}$, write $\pi(b_i) = \sum_{j=1}^l n_{ij}\pi(d_j)$ with $n_{ij} \in \N_0$.
Define \begin{equation*}
\varphi: X(T')\rightarrow \Z^l, \quad \lambda \mapsto  \sum_{j=1}^l \sum_{i=1}^s \min\{ \lambda_a \vert a \in M_i \} n_{ij} e_j.
\end{equation*}
We show that $\varphi$ induces a function $\bar{\varphi}: X(T) \rightarrow \Z^l$ with the properties of \ref{assumption: existence of function}. For this, we show the following statements $(i)-(v)$.\\
\indent $(i)$ $\forall \lambda \in X(T'), \mu \in \ker(\pi): \varphi(\lambda + \mu) = \varphi(\lambda):$\\
Let $\mu \in \ker(\pi)$. Since the sets $M_i$ are pairwise disjoint, we have $\mu' = \mu - \sum_{i=1}^s \min\{ \mu_a \vert a \in M_i\}b_i \in P(D')$ and for every $i$, there is $a \in M_i$ such that $\mu'_a = 0$. By \ref{lem: coordinates in Mi of elements of T'bot} below and $(a)$, we get $\mu'_l = 0$ for all $l \in M_i$. As the $M_i$ form a partition of $\{1, \ldots, n\}$, we get $\mu' = 0$, so that $ \mu = \sum_{i=1}^s \min\{ \mu_a \vert a \in M_i\}b_i$. Consequently, $\varphi(\lambda + \mu) = \varphi(\lambda) + \varphi(\mu)$ for all $\lambda \in X(T')$.\\
\indent As $\mu \in \ker(\pi)$, we have 
\begin{align*}
0 = \pi(\mu)=& \sum_{i=1}^s \min\{\mu_a \vert a \in M_i\} \pi(b_i) \\
	=& \sum_{j=1}^l \left(\sum_{i=1}^s n_{ij} \min\{\mu_a \vert a \in M_i\}\right)\pi(d_j),
\end{align*}
so that $\sum_{i=1}^s n_{ij} \min\{\mu_a \vert a \in M_i\} = 0$ for every $j$ and $\varphi(\mu)= 0$. Hence $(i)$ holds.\\  
\indent $(ii)$ $\forall \lambda \in X(T'): \varphi(p^r \lambda) = p^r \varphi(\lambda):$\\
Clear by definition.\\
\indent $(iii)$ $\forall \lambda \in X(T'): \varphi(\lambda) \in \N_0^l \Longleftrightarrow \lambda \in P(D)+ \ker(\pi)$:\\ Suppose $\varphi(\lambda) \in \N_0^l$. We have $\lambda - \sum_{i=1}^s \min\{\lambda_a \vert a \in M_i \}b_i \in P(D')$, so that $\pi(\lambda) - \sum_{i=1}^s \min\{\lambda_a \vert a \in M_i \} \pi(b_i) \in \pi(P(D'))$. Hence $\pi(\lambda) \in \pi(P(D')) +  \sum_{j=1}^l\sum_{i=1}^s \min\{\lambda_a \vert a \in M_i \}n_{ij}\pi(d_j)$. By assumption, the coefficients in the linear combination are non-negative, so that this set is contained in $\pi(P(D'))$ and $\lambda \in P(D') + \ker(\pi)$.\\
\indent Now let $\lambda = \lambda' + \mu \in P(D') + \ker(\pi)$. Since $\varphi(\lambda) = \varphi(\lambda')$ by $(i)$ and since $\lambda'$ has no negative coordinates and $n_{ij} \geq 0$, we have $\varphi(\lambda) \in \N_0^l$.\\  
\indent $(iv)$ $\forall \lambda, \lambda' \in X(T'): \exists w \in W: \varphi(w .\lambda + \lambda') = \varphi(\lambda) + \varphi(\lambda')$:\\
Using $(c)$, we permute the coordinates in each $M_i$ for $\lambda$ in such a way that the lowest coordinate for $w.\lambda$ and $\lambda'$ in each $M_i$ occurs at the same place. Then clearly $\varphi(w.\lambda + \lambda') = \varphi(\lambda) + \varphi(\lambda')$.\\
\indent $(v)$ $\varphi(d_i) = e_i$:\\
This follows directly from $(d)$ and the definition of $\varphi$.\\

\indent The statement $(i)$ implies that there is a function $\bar{\varphi}: X(T) \rightarrow \Z^l$ such that $\varphi = \bar{\varphi} \circ \pi$. Now $(ii)-(v)$ imply that $\bar{\varphi}$ has the properties of \ref{assumption: existence of function}.   

\end{proof}

\begin{lemma}
In the situation of \ref{thm: construction of phi}, let $i \in \{1, \ldots, s\}$ and $\mu \in \ker(\pi)$. Then $\mu_k = \mu_l$ for all $l, k \in M_i$.
\label{lem: coordinates in Mi of elements of T'bot}
\end{lemma}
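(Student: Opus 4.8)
The plan is to prove the statement one pair at a time. Fix $i$ and two indices $k, l \in M_i$ with $k \neq l$ (the case $k = l$ being vacuous), and aim to show $\mu_k = \mu_l$. By hypothesis~(c) the transposition $(k\,l)$ lies in $S_{M_i} \subseteq S_{M_1} \times \cdots \times S_{M_s}$ and hence in the image of $W$, so I can choose $w \in W$ acting on $X(T') \cong \Z^n$ by the coordinate transposition exchanging $e_k$ and $e_l$; note that $w \neq 1$ because its image in the permutation group is nontrivial. A direct computation then gives $w \cdot \mu - \mu = (\mu_l - \mu_k)(e_k - e_l)$. Applying the $W$-equivariant map $\pi$ and using $\pi(\mu) = 0$, the left-hand side becomes $w \cdot \pi(\mu) - \pi(\mu) = 0$, so that $(\mu_l - \mu_k)\,\pi(e_k - e_l) = 0$ in $X(T)$.

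Since $X(T)$ is torsion-free, it then suffices to show $\pi(e_k - e_l) \neq 0$, i.e. $\pi(e_k) \neq \pi(e_l)$; this is the crux of the argument. I would argue by contradiction. If $\pi(e_k) = \pi(e_l)$, then $e_k - e_l \in \ker(\pi)$, and for \emph{every} $x \in X(T')$ the same computation yields $w \cdot x - x = (x_l - x_k)(e_k - e_l) \in \ker(\pi)$. By equivariance this means $w \cdot \pi(x) = \pi(x)$ for all $x$, and since $\pi$ is surjective, $w$ acts as the identity on $X(T) = \pi(X(T'))$. But $W$ is the Weyl group of the connected reductive group $G$, and it acts faithfully on $X(T)$ (a representative in $N_G(T)$ inducing the trivial action on $X(T)$ centralizes $T$, hence lies in $C_G(T) = T$ and is trivial in $W = N_G(T)/T$); this contradicts $w \neq 1$. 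Therefore $\pi(e_k) \neq \pi(e_l)$, and the displayed relation forces $\mu_k = \mu_l$.

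The single nontrivial step is the inequality $\pi(e_k) \neq \pi(e_l)$, and this is precisely where hypothesis~(c) is indispensable: it guarantees that the coordinate swap inside a block is realized by a genuine, nontrivial element of $W$, which by faithfulness cannot act trivially on $X(T)$. The inequality really does use this input and is false for a general embedding — for instance, a diagonal embedding $\GL_2 \hookrightarrow \GL_4$ has $\pi(e_1) = \pi(e_3)$ — so no argument bypassing the Weyl-group structure can succeed; in such situations (c) simply cannot be met with the offending indices in a common block. I expect the manipulation of the equivariance identities to be entirely routine, with the whole conceptual content concentrated in linking (c) to faithfulness of the $W$-action on $X(T)$.
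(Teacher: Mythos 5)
Your proof is correct and is essentially the paper's argument: both use hypothesis (c) to realize the transposition $(k\,l)$ by some $w \in W$, apply $W$-equivariance of $\pi$ together with torsion-freeness of $X(T)$ to the identity $w\cdot\mu - \mu = (\mu_l - \mu_k)(e_k - e_l)$, and conclude via faithfulness of the $W$-action on $T$. The only difference is organizational — you isolate $\pi(e_k) \neq \pi(e_l)$ as the key claim and spell out the faithfulness argument ($C_G(T) = T$) that the paper compresses into ``$w$ acts trivially on $T$, a contradiction.''
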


\begin{proof}
Let  $\mu \in \ker(\pi)$ and $i \in \{1, \ldots, s\}$. Suppose there are $k, l \in M_{i}$ such that $\mu_k \neq \mu_l$. Let $w = (k \ l) \in W$ be the transposition with respect to $k$ and $l$. Then $\mu - w.\mu= (\mu_k - \mu_l)(e_k - e_l)\in \ker(\pi)$. As the restriction of this element is zero in $X(T)$ and $X(T)$ is torsion-free, $(e_k - e_l)\vert_T = 0$, so that $e_k \vert_{T} = e_l \vert_{T}$. It follows that $w$ acts trivially on $T$, a contradiction.
\end{proof}

The function $\varphi$ defined in the proof of \ref{thm: construction of phi} provides an easy way to check whether the restriction of a character $\lambda \in X(T')$ to $T$ is a polynomial weight.\\
We now apply our results to several examples. The first part of the following corollary is \cite[Corollary 3.2]{DNP1}. For the orthogonal and symplectic similitude groups $GO_{2l+1}$ and $GSp_{2l}$, we adopt the definition and notation from \cite[Sections, 5, 7]{Doty1}. Recall that with suitable choice of defining form, the maximal torus of diagonal matrices in $GSp_{2l}$ resp. $GO_{2l+1}$ is given by $\{\diag(t_1, \ldots, t_{2l}) \vert t_{ii}t_{i'i'} = t_{jj}t_{j'j'} $ for $1 \leq i,j \leq l \}$ resp. $\{\diag(t_1, \ldots, t_{2l+1}) \vert t_{ii}t_{i'i'} = t_{jj}t_{j'j'} $ for $1 \leq i,j \leq l+1 \}$.  

\begin{corollary}
\begin{enumerate}[(a)]
\item For $G = \GL_n, T$ the torus of diagonal matrices, a complete set of representatives of simple polynomial $G_r T$-modules is given by $\lbrace \widehat{L_r}(\lambda) \vert \lambda \in P_r(D)+ p^r P(D)\rbrace$, where $P_r (D)= \lbrace \lambda \in P(D) \vert 0 \leq \lambda_i - \lambda_{i+1} \leq p^r - 1$ for  $ 1 \leq i \leq n - 1, 0 \leq \lambda_n \leq p^r - 1\rbrace$. 
\item Let $G = GSp_{2l}, T$ the torus of diagonal matrices in $G$. Then a complete set of representatives of the isomorphism classes of simple polynomial $G_r T$-modules is given by $\lbrace \widehat{L_r}(\lambda) \vert \lambda \in P_r(D)+ p^r P(D)\rbrace$, where $P_r (D)= \lbrace \lambda \in P(D) \cap X_r(T) \vert  \lambda - p^r d \notin P(D)\rbrace$ and $d$ is the character of $T$ given by $t \mapsto t_{11}t_{nn}$.
\item Let $G = GO_{2l+1}, T$ the torus of diagonal matrices in $G$. If $\lambda \in X_r(T) + p^r X(T)$ and all weights of the module $\widehat{L_r}(\lambda)$ are polynomial, then $\lambda \in P_r(D) + p^r P(D)$, where $P_r (D)= \lbrace \lambda \in P(D) \cap X_r(T) \vert  \lambda - p^r d \notin P(D)\rbrace$ and $d$ is the character of $T$ given by $t \mapsto t_{l+1}$. 
\item Let $n_i \in \N$ such that $n = \sum_{i=1}^{l} n_i$ and embed $G = \GL_{n_1} \times \ldots \times \GL_{n_l}$ into $\GL_n$ as a Levi subgroup in the obvious way. Let $M_i = \{n_{1}+ \ldots + n_{i-1}+1, \ldots, n_{1}+ \ldots + n_{i}\}$ and $d_i = \sum_{j \in M_i} e_j$ for $1 \leq i \leq l$. Then a complete set of representatives of isomorphism classes of simple polynomial $G_r T$-modules is given by $\{\widehat{L_r}(\lambda) \vert \lambda \in P_r(D)+p^r P(D)\}$, where $P_r(D) = \{\lambda \in X_r(T) \cap P(D) \vert \lambda - p^r d_i \notin P(D)$ for  $1 \leq i \leq l\}$.

\end{enumerate}
\label{cor: simple polynomial GrT-modules for GL_n, GSp_2l, GO2_l+1}
\end{corollary}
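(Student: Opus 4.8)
The plan is to obtain all four classifications from the abstract machinery already established, by exhibiting in each case the combinatorial input required by Theorem \ref{thm: construction of phi} and then invoking Corollary \ref{cor: criterion for simple modules with Phi and d} --- except for the odd orthogonal case, where only the weaker Theorem \ref{thm: all weights of simple modules polynomial iff weight belongs to Xr(D)+p^r P(D)} is available. In every case $T'$ is the full diagonal torus of $\GL_n$, so $X(T') = \Z^n$, the image of $W'$ is $S_n$, and $P(D') = \N_0^n$; the task is to produce vectors $b_i$ and a partition $\{M_i\}$ of $\{1, \ldots, n\}$ satisfying conditions (a)--(d) of Theorem \ref{thm: construction of phi}.

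For $\GL_n$ I would take the single block $M_1 = \{1, \ldots, n\}$ and $b_1 = d_1 = e_1 + \cdots + e_n$; for the Levi subgroup I take the given blocks $M_i$ and $b_i = d_i = \sum_{j \in M_i} e_j$. For $GSp_{2l}$ (so $n = 2l$) I pair $i$ with $i' = 2l+1-i$, set $M_i = \{i, i'\}$ and $b_i = e_i + e_{i'}$ for $1 \leq i \leq l$, and take $d_1 = b_1 = e_1 + e_n$; for $GO_{2l+1}$ (so $n = 2l+1$) I use the $l$ pairs $M_i = \{i, i'\}$ with $i' = 2l+2-i$ and $b_i = e_i + e_{i'}$ together with the singleton $M_{l+1} = \{l+1\}$, $b_{l+1} = e_{l+1}$, and take $d_1 = b_{l+1} = e_{l+1}$. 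Conditions (a) and (b) are immediate, and since each $b_i$ has coordinates in $\{0,1\}$ we get $b_i \in P(D')$. For (c) I identify the image of $W$ in $S_n$: in the symplectic and orthogonal cases $W$ is the relevant hyperoctahedral group, which contains the swaps $i \leftrightarrow i'$ generating $S_{M_1} \times \cdots \times S_{M_s}$ and sits inside $S_n$, while for $\GL_n$ and the Levi subgroup the two groups coincide. For (d) I read off $X_0(T)$ from the explicit torus: the defining relations $t_{ii}t_{i'i'} = t_{jj}t_{j'j'}$ force all paired $\pi(b_i)$ to equal a single dilation character, so $X_0(T)$ has rank one for $GSp_{2l}$ and $GO_{2l+1}$ (and rank $l$ for the Levi subgroup), confirming $\pi(b_i) \in X_0(T)$; here $\pi(b_i) = \pi(d_1)$ in the symplectic case, whereas in the orthogonal case $\pi(b_i) = 2\pi(d_1)$ for the pairs and $\pi(b_{l+1}) = \pi(d_1)$, so the coefficients $n_{ij}$ are non-negative in each case. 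Theorem \ref{thm: construction of phi} then produces $\varphi$, and tracking the chosen $d_i$ through its construction shows that the resulting $P_r(D)$ is exactly the set stated in each part (for $\GL_n$ this is the reformulation already recorded in the introductory discussion).

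With $\varphi$ in hand I would finish parts (a), (b), (d) by checking the two remaining hypotheses of Corollary \ref{cor: criterion for simple modules with Phi and d}. The derived subgroups $\SL_n$, $Sp_{2l}$, and $\SL_{n_1} \times \cdots \times \SL_{n_l}$ are simply connected, and $M = \overline{G}$ is normal: for $\GL_n$ and the Levi subgroup it is a product of full matrix monoids, hence smooth, and for $GSp_{2l}$ I would invoke the known normality of the symplectic monoid. The corollary then yields the stated classification. For $GO_{2l+1}$ neither hypothesis is at my disposal --- the derived subgroup $SO_{2l+1}$ is not simply connected and the odd orthogonal monoid need not be normal --- so I can only apply Theorem \ref{thm: all weights of simple modules polynomial iff weight belongs to Xr(D)+p^r P(D)}, which gives precisely the one-directional conclusion of (c).

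I expect the main obstacle to lie in the verification of condition (d) for the symplectic and orthogonal groups: this is where the explicit defining form, the resulting relations among the diagonal entries, and the exact rank of $X_0(T)$ together with the description of $P(D)$ must be pinned down, and it is the appearance of the factor $2$ for the paired coordinates in the orthogonal case (together with the failure of normality and simple connectedness there) that forces the weaker statement. The one genuinely external input is the normality of the symplectic monoid, which I would cite from the structure theory of reductive monoids rather than reprove.
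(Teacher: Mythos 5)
Your proposal is correct and follows essentially the same route as the paper: the same partitions $M_i$ and vectors $b_i$, $d_i$ (including the relation $\pi(b_i) = 2\pi(d)$ for the paired coordinates in the odd orthogonal case) are fed into Theorem \ref{thm: construction of phi}, parts (a), (b), (d) are then finished via Corollary \ref{cor: criterion for simple modules with Phi and d} using simple connectedness of the derived subgroups and normality of $M$ (with the symplectic case cited externally, exactly as the paper cites Doty--Hu), and part (c) falls back on Theorem \ref{thm: all weights of simple modules polynomial iff weight belongs to Xr(D)+p^r P(D)} for the reasons you identify. Your explicit justification of normality for the Levi closure as a product of matrix monoids even supplies a detail the paper leaves implicit.
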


\begin{proof}
(a) Let $d = \det \vert_{T} = (1, \ldots, 1), M = \{1, \ldots, n\} $. Clearly $d$ and $M$ have the properties of \ref{thm: construction of phi} and $P_r(D)$ has the required form. As the derived subgroup of $\GL_n$ is simply connected and $\Mat_n$ is normal, the result follows from \ref{cor: criterion for simple modules with Phi and d}.
\\
(b) We have $W \cong S_{l} \ltimes (\Z/(2))^l$. We can identify $W$ with a subgroup of $W' \cong S_{2l}$ by letting $\sigma \in S_l$ act on $\lambda \in X(T')$ via $(\sigma.\lambda)_{i} = \lambda_{\sigma^{-1}(i)}, (\sigma.\lambda)_{i'} = \lambda_{(\sigma^{-1}(i))'}$ for $1 \leq i \leq l$ and letting the $i$-th unit vector of $(\Z/(2))^l$ act as the transposition $(i i')$. With this action, the canonical projection $\pi: X(T') \rightarrow X(T)$ is $W$-equivariant.\\
\indent For $1 \leq i \leq l$, let $d_i = e_i + e_{i'}$ and $M_i = \{i, i'\}$. Then $\pi(d_{i}) = \pi(d_{j})$ for $1 \leq i, j \leq l$,the character $d =\pi(d_1)$ is a basis of $X_0(T)$ and the $M_i$ form a partition of $\{1, \ldots, 2l\}$. \\
\indent Inside $W \cong S_l \ltimes (\Z/(2))^l$, $(\Z/(2))^l$ corresponds to $S_{M_1} \times \ldots \times S_{M_l}$. Thus, \ref{thm: construction of phi} yields that $G$ has the properties of \ref{assumption: existence of function} and $P_r(D)$ has the desired form.\\
\indent The derived subgroup of $G$ is $Sp_{2l}$, hence simply connected, and by the remark below 2.11 in \cite{DotyHun1}, $\overline{G}$ is normal. The result now follows from \ref{cor: criterion for simple modules with Phi and d}.   
\\
(c) For $1 \leq i \leq l$, let $d_i = e_i + e_{i'}$, $M_i = \{i, i'\}$ and let $d_{l+1}= e_{l+1}$, $M_{l+1} = \{l+1\}$. Then the $M_i$ form a partition of $\{1, \ldots, 2l+1\}$, the character $d =\pi(d_{l+1})$ is a basis of $X_0(T)$ and $\pi(d_i) = 2 \pi(d)$ for $1 \leq i \leq l$. \\
\indent The other properties of \ref{thm: construction of phi} can be checked as in the proof of $(b)$, so that $G$ has the properties of \ref{assumption: existence of function} and $P_r(D)$ has the desired form. The result now follows from \ref{thm: all weights of simple modules polynomial iff weight belongs to Xr(D)+p^r P(D)}.
\\
(d) The $M_i$ form a partition of $\{1, \ldots, n\}$. By definition of $G$, we have $W = S_{M_1} \times \ldots \times S_{M_l} \subseteq W' = S_n$ and the $d_i$ form a basis of $X_0(T)$. Thus, \ref{thm: construction of phi} shows that $G$ has the properties of \ref{assumption: existence of function}, so that the result follows from \ref{cor: criterion for simple modules with Phi and d}.  

\end{proof}

We give an example showing that our approach does not work for the connected component $G$ of the even orthogonal similitude group $GO_{2l}$. Let $l = 4$ and choose $p, r$ such that $4 \vert p^r - 1$. Let $T, T', d$ be as in \ref{cor: simple polynomial GrT-modules for GL_n, GSp_2l, GO2_l+1}.(b). Let  $\varphi: X(T') \rightarrow \Z$, $t \mapsto \sum_{i=1}^l \min \{t_i, t_{i'} \}$. The same proof shows that with the exception of $(iv)$, all statements in the proof of \ref{thm: construction of phi} hold for $\varphi$ and $d$.\\
\indent Let $\lambda_0, \tilde{\lambda} \in X(T')$ be the characters given by 
\begin{align*}
\lambda_0 &=\left(\frac{p^r - 1}{2}, \frac{p^r - 1}{2}, \frac{p^r - 1}{2}, \frac{p^r - 1}{2}, \frac{p^r - 1}{4}, \frac{p^r - 1}{4}, \frac{p^r - 1}{4}, \frac{p^r - 1}{4}\right),\\
\tilde{\lambda} &= (0,0,0,0, 1, 1, -1, 1).
\end{align*} 
Then 
\begin{align*}
\varphi(\lambda_0) &= p^r - 1 > 0, \\
\varphi(\lambda_0 - p^r d) &= -1 < 0, \\
\varphi(\tilde{\lambda}) &= -1 < 0
\end{align*}
and $\lambda_0 \vert_{T} \in X_r(T)$, so that $\lambda_{0} \vert_{T} \in P_r(D), \tilde{\lambda}\vert_{T} \notin P(D)$. \\
\indent In $W \cong S_l \ltimes (\Z/(2))^{l-1}$, $S_l$ acts on $X(T')$ as in the proof of \ref{cor: simple polynomial GrT-modules for GL_n, GSp_2l, GO2_l+1}.(b) while $(\Z/(2))^{l-1}$ acts as the subgroup $\lbrace x \in (\Z/(2))^l \vert x_i \neq 0$ for an even number of $i \rbrace$ of $(\Z/(2))^{l}$. Thus, $\lambda_0$ is invariant under $S_l$ and we have $\varphi(w. \lambda_0 \vert_{T} + p^r \tilde{\lambda} \vert_{T}) > 0$ and hence $w. \lambda_0 \vert_{T} + p^r \tilde{\lambda}\vert_{T} \in P(D)$ for all $w \in (\Z/(2))^{l-1}$. Hence the arguments in the proof of \ref{thm: all weights of simple modules polynomial iff weight belongs to Xr(D)+p^r P(D)} are not applicable in this case.\\
\indent As the statement $w. \lambda_0 \vert_T + p^r \tilde{\lambda} \vert_T \in P(D)$ does not depend on $\varphi$ and there is no other choice for $d$ in this case, we cannot find other $\varphi$ or $d$ in this example to salvage the proof of \ref{thm: all weights of simple modules polynomial iff weight belongs to Xr(D)+p^r P(D)}. However, since we have not computed the weights of $\widehat{L_r}(\lambda_0 \vert_T + p^r \tilde{\lambda}\vert_T)$, we do not know whether all weights of this module are in fact polynomial.

\section{Equivalences between blocks of polynomial representations}

We adopt the notation of the previous section. Suppose that $G$ has simply connected derived subgroup and that there is $0 \neq b \in X_0(T) \cap P(D)$ such that $\langle b \rangle = X_0(T)$. Choose representatives $(\omega_\alpha)_{\alpha \in \Delta}$ of the fundamental dominant weights of the derived subgroup such that for all $\alpha \in \Delta$, $\omega_\alpha \in P(D)$ and $\omega_\alpha - b \notin P(D)$. Then the $\omega_\alpha$ together with $b$ form a basis of $X(T)$. Let $\varphi: X(T) \rightarrow \Z$ be the projection on the coordinate corresponding to $b$ and define $P_r(D) = \lbrace \lambda \in X_r(T) \cap P(D) \vert 0 \leq \varphi(\lambda) \leq p^r -1\rbrace$. Then each $\lambda \in X(T)$ can be written uniquely as $\lambda = \lambda_0 + p^r \tilde{\lambda}$ with $\lambda_0 \in P_r(D), \tilde{\lambda} \in X(T)$. Note that this redefinition of $P_r (D)$ is compatible with the old definition for $G = \GL_n, GSP_{2l}, GO_{2l+1}$.\\
\indent By \cite[II.1.18]{Jantz1}, we have $X(G) \cong X_0(T)$ via restriction, so that we can regard $b$ as a character of $G$ and have a shift functor $[b]:\m G_r T \rightarrow \m G_r T, V \mapsto V[b] = V \otimes_k k_b$, where $k_b$ is the one-dimensional $G$-module induced by $b$. This shift functor is an autoequivalence of $\m G_r T$ with inverse $[-b]$, hence it induces equivalences between blocks of $\m G_r T$.\\
\indent Since $-b$ is not a polynomial weight, $[-b]$ does not restrict to an autoequivalence of the category of polynomial $G_r T$-modules. However, we will see in this section that the functor $[b]$ sometimes still induces an equivalence between blocks of this category.\\
\indent Let $W_p \cong W \ltimes p\Z R$ be the affine Weyl group of $G$, let $R^+$ be a set of positive roots and $\rho = \sum_{\alpha \in R^+} \frac{1}{2}\alpha$ be the half-sum of positive roots. We denote by $w \bullet \lambda = w(\lambda + \rho) -\rho$ the dot-action of $w \in W_p$ on $\lambda \in X(T)$. For $a \in \Z$, if $a = qp + r$ for $q,r \in \Z$ such that $0 \leq r \leq p - 1 $, we write $ r = a \mod p$.      

\begin{lemma}
\label{lem: shift defines bijection between orbits of the affine weyl group}
Let the derived subgroup of $G$ be simply connected. Let $\lambda \in P_r(D) + p^r P(D)$ and $a = \max_{w \in W}\{ \varphi(w.\lambda+ w.\rho - \rho) \mod p\}$. Then for $1 \leq i \leq p-a-1$, the shift $[ib]$ defines a bijection
\begin{equation*}
W_p \bullet \lambda \cap (P_r(D)+p^r P(D)) \rightarrow W_p \bullet(\lambda + i b) \cap (P_r(D)+p^r P(D)).
\end{equation*}
\end{lemma}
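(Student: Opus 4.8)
The plan is to reduce the statement to a purely combinatorial claim about the linear function $\varphi$ and then to verify that claim using the defining bound on $i$.

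First I would record that, since $b\in X_0(T)$ is fixed by $W$ and the translation part of $W_p$ lies in $p\Z R$, the dot-action commutes with translation by $ib$: for $w\in W_p$ one has $w\bullet(\mu+ib)=(w\bullet\mu)+ib$, so that $W_p\bullet(\lambda+ib)=(W_p\bullet\lambda)+ib$. Consequently the autoequivalence $[ib]$ sends $\widehat{L_r}(\mu)$ to $\widehat{L_r}(\mu+ib)$ and hence acts on highest weights by $\mu\mapsto\mu+ib$, giving a bijection $W_p\bullet\lambda\to W_p\bullet(\lambda+ib)$. The only thing left to prove is that this bijection carries $P_r(D)+p^rP(D)$ into itself within the two orbits, in both directions; equivalently, that for every $\mu\in W_p\bullet\lambda$ one has
\[
\mu\in P_r(D)+p^rP(D)\iff\mu+ib\in P_r(D)+p^rP(D).
\]

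The key step is to compute the canonical decomposition of $\mu+ib$ from that of $\mu$. Write $\mu=\mu_0+p^r\tilde\mu$ with $\mu_0\in P_r(D)$ and $\tilde\mu\in X(T)$, which exists and is unique. Since $\varphi$ is linear and $\varphi(\mu_0)\in\{0,\dots,p^r-1\}$, we have $\varphi(\mu_0)=\varphi(\mu)\bmod p^r$; moreover, because $\mu$ lies in $W_p\bullet\lambda$, writing $\mu=w\bullet\lambda+p\gamma$ with $\gamma\in\Z R$ shows $\varphi(\mu)\equiv\varphi(w\bullet\lambda)\pmod p$, so $\varphi(\mu)\bmod p\le a$ (recall $w.\lambda+w.\rho-\rho=w\bullet\lambda$, so that $a$ is precisely the maximal residue modulo $p$ of $\varphi$ over the whole orbit). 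I would then observe that $\mu+ib=(\mu_0+ib)+p^r\tilde\mu$ and check that $\mu_0+ib\in P_r(D)$: restrictedness is preserved because $\langle b,\alpha^\vee\rangle=0$, so $\langle\mu_0+ib,\alpha^\vee\rangle=\langle\mu_0,\alpha^\vee\rangle$; membership in $P(D)$ holds because $P(D)$ is a monoid containing both $\mu_0$ and $b$ and $i\ge1$; and the remaining point is that $0\le\varphi(\mu_0)+i\le p^r-1$. Granting this, uniqueness of the decomposition shows that $(\mu_0+ib)+p^r\tilde\mu$ is the canonical decomposition of $\mu+ib$, with the \emph{same} $\tilde\mu$, whence $\mu\in P_r(D)+p^rP(D)\iff\tilde\mu\in P(D)\iff\mu+ib\in P_r(D)+p^rP(D)$.

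The hard part, and the only place the bound $i\le p-a-1$ is used, is verifying $\varphi(\mu_0)+i\le p^r-1$, i.e. that adding $i$ triggers no carry past a multiple of $p^r$. The problematic values, for which $\varphi(\mu_0)+i\ge p^r$, are exactly $p^r-i,\dots,p^r-1$, whose residues modulo $p$ are $p-i,\dots,p-1$; since $\varphi(\mu_0)\bmod p=\varphi(\mu)\bmod p\le a\le p-1-i<p-i$, the residue of $\varphi(\mu_0)$ avoids this forbidden range, so no problematic value occurs. This is the heart of the argument. Once it is in place, the displayed equivalence yields the bijection directly; surjectivity is obtained by applying the same equivalence with $\mu+ib\in W_p\bullet(\lambda+ib)$ (equivalently $\mu=\nu-ib$) in place of $\mu$, which is legitimate since the reverse translation again lands in $W_p\bullet\lambda$ and the residue bound is symmetric.
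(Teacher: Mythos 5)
Your proof is correct and takes essentially the same route as the paper's: both reduce the lemma to showing that for each $\mu \in W_p\bullet\lambda$ the canonical decomposition of $\mu+ib$ is $(\mu_0+ib)+p^r\tilde{\mu}$ with the same $\tilde{\mu}$, using that $b$ is $W$-fixed, that $\varphi(\mu_0)\bmod p = \varphi(w\bullet\lambda)\bmod p \leq a$ (the translation part of $W_p$ contributing only multiples of $p$), and the bound $i \leq p-a-1$ to rule out a carry past $p^r-1$. Your explicit checks that $\mu_0+ib$ remains in $X_r(T)\cap P(D)$ merely spell out steps the paper leaves implicit.
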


\begin{proof}
Let $(w, pl \mu) \in W_p$. We show that $(w, pl \mu) \bullet \lambda \in P_r(D) + p^r P(D)$ iff $(w, pl \mu)\bullet(\lambda + i b) \in P_r(D) + p^r P(D)$. We have $(w, pl \mu)\bullet \lambda = w.\lambda + w.\rho - \rho + pl\mu$ and $(w, pl \mu)\bullet(\lambda+ ib) = (w, pl \mu)\bullet \lambda  + i b$ by definition of the dot-action and since $w.b = b$. Writing $ w.\lambda + w.\rho - \rho + pl\mu = \nu_0 + p^r \tilde{\nu}$ with uniquely determined $\nu_0 \in P_r(D)$ and $\tilde{\nu} \in X(T)$, we have $\varphi(\nu_0) \mod p = \varphi(w.\lambda+ w.\rho - \rho) \mod p \leq a$. It follows that $(\varphi(\nu_0) \mod p) + i \leq a + p - a - 1 = p - 1$. Thus, $\varphi(\nu_0 + ib) \leq p^r - 1$, so that $\nu_0 + i b \in P_r(D)$. It follows that $(w, pl \mu)\bullet(\lambda+ ib) = (\nu_0 + ib) + p^r \tilde{\nu}$ is the unique decomposition of $(w, pl \mu)\bullet(\lambda+ ib)$ as a sum of elements of $P_r (D)$ and $p^r X(T)$. We get $(w, pl \mu)\bullet \lambda \in P_r(D) + p^r P(D) \Longleftrightarrow \tilde{\nu} \in P(D) \Longleftrightarrow (w, pl \mu)\bullet (\lambda + ib) \in P_r(D) + p^r P(D)$.
\end{proof}

For $G = \GL_n$, we can choose $b = \det$ and have $a = \max_{w \in W}\{ (w.\lambda)_n + (w.\rho)_n - \rho_n \mod p\}$. We also have that every $G_r T$-module $V$ such that all weights of $V$ are polynomial lifts to $\overline{G_r T} = M_r D$ by Jantzen's result in \cite[5.4]{Nak1}, implying $\Ext_{G_r T}^1(V, W) \cong \Ext_{M_r D}^1(V, W)$ for all $V, W \in \m M_r D$. For large $r$, the infinitesimal Schur algebra $S_d(G_r T)$ is isomorphic to $S_d(G)$, so that the following proposition can also be applied to blocks of the ordinary Schur algebra of $\GL_n$. 
 
\begin{proposition}
 Let $G = \GL_n, T$ the torus of diagonal matrices. Let $\lambda \in P_r(D) + p^r P(D)$ and $a = \max_{w \in W}\{ (w.\lambda)_n + (w.\rho)_n - \rho_n \mod p\}$. Then for $1 \leq i \leq p-a-1$, the functor $[i \det]$ defines an equivalence between the blocks of $\m M_r D$ containing $\widehat{L}_r(\lambda)$ and $\widehat{L}_r(\lambda + i \det)$. 
\end{proposition}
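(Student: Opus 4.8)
The plan is to realize $[i\det]$ and its quasi-inverse $[-i\det]$ as mutually inverse equivalences between the two blocks, using Lemma~\ref{lem: shift defines bijection between orbits of the affine weyl group} to control the simple objects and the identification $\Ext^{1}_{M_r D}\cong\Ext^{1}_{G_r T}$ on polynomial modules to transport linkage between the two settings. First I would record the bookkeeping: the simple objects of $\m M_r D$ are exactly the $\widehat{L}_r(\mu)$ with $\mu\in P_r(D)+p^r P(D)$ by Corollary~\ref{cor: simple polynomial GrT-modules for GL_n, GSp_2l, GO2_l+1}(a); the category $\m M_r D$ splits along polynomial degree, so each block lies in a single homogeneous component and $[i\det]$ raises degree by $in$ (matching the comparison with blocks of $S_d(\GL_n)$ for different $d$); and, by the quoted $\Ext$-isomorphism together with the linkage principle for $G_r T$ (\cite{Jantz1}), every simple object of the block $B_\lambda$ of $\widehat{L}_r(\lambda)$ has highest weight in $W_p\bullet\lambda\cap(P_r(D)+p^r P(D))$, and likewise for the block $B_{\lambda+i\det}$ of $\widehat{L}_r(\lambda+i\det)$.

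Since $i\det$ is a polynomial weight, tensoring with $k_{i\det}$ keeps all weights polynomial, so $[i\det]$ restricts to an exact functor $\m M_r D\to\m M_r D$ sending $\widehat{L}_r(\lambda)$ to $\widehat{L}_r(\lambda+i\det)$. As tensoring with the invertible one-dimensional $G$-module $k_{i\det}$ is an autoequivalence of $\m G_r T$, it preserves $\Ext^{1}_{G_r T}$ and hence, via the isomorphism with $\Ext^{1}_{M_r D}$, the Ext-quiver of $\m M_r D$. Consequently it carries the whole block $B_\lambda$ into $B_{\lambda+i\det}$. This is the easy half.

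The substance is the reverse functor $[-i\det]$, which a priori need not preserve $\m M_r D$ because $-i\det$ is not polynomial. Here I would invoke that a $G_r T$-module is polynomial iff all of its weights are polynomial (the appendix of \cite{Nak1}), so that a module is polynomial iff all its composition factors are; it then suffices to test $[-i\det]$ on the simple objects of $B_{\lambda+i\det}$. Any such simple is $\widehat{L}_r(\nu)$ with $\nu\in W_p\bullet(\lambda+i\det)\cap(P_r(D)+p^r P(D))$, and the bijection of Lemma~\ref{lem: shift defines bijection between orbits of the affine weyl group} is induced on weights by $\mu\mapsto\mu+i\det$, so its inverse $\nu\mapsto\nu-i\det$ sends $\nu$ back into $W_p\bullet\lambda\cap(P_r(D)+p^r P(D))\subseteq P_r(D)+p^r P(D)$. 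Hence $[-i\det]\widehat{L}_r(\nu)=\widehat{L}_r(\nu-i\det)$ is again polynomial, so $[-i\det]$ maps all of $B_{\lambda+i\det}$ into $\m M_r D$; arguing with the $\Ext$-isomorphism as before, and using $\widehat{L}_r(\lambda+i\det)\mapsto\widehat{L}_r(\lambda)$, its image lands in $B_\lambda$.

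Finally, since $[i\det]$ and $[-i\det]$ are mutually inverse autoequivalences of $\m G_r T$ and have just been shown to restrict to functors $B_\lambda\to B_{\lambda+i\det}$ and $B_{\lambda+i\det}\to B_\lambda$, these restrictions are mutually inverse equivalences, which proves the proposition. I expect the only genuine obstacle to be this reverse direction, namely guaranteeing that $[-i\det]$ does not leave the polynomial category when restricted to $B_{\lambda+i\det}$; this is exactly where the hypothesis $1\leq i\leq p-a-1$ is used, entering through the bijection of Lemma~\ref{lem: shift defines bijection between orbits of the affine weyl group}.
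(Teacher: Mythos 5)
Your proposal is correct and takes essentially the same route as the paper: the forward direction via polynomiality of $i\det$, and the reverse direction via the linkage principle \cite[II.9.19]{Jantz1}, the bijection of Lemma~\ref{lem: shift defines bijection between orbits of the affine weyl group}, and Jantzen's isomorphism $\Ext^1_{G_r T} \cong \Ext^1_{M_r D}$ from \cite[5.4]{Nak1} to see that $[-i\det]$ stays in the polynomial category and lands in the correct block. The only (harmless) difference is presentational: the paper establishes the second half as an equality of the two blocks' weight sets by walking along chains of nonsplit extensions, whereas you phrase it as restricting the mutually inverse functors $[i\det]$ and $[-i\det]$, checked on composition factors.
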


\begin{proof}
Let $B$ be the block containing $\lambda$ and $B'$ be the block containing $\lambda + i \det$. As $[i \det]$ maps nonsplit extensions in $\m M_r D$ to nonsplit extensions in $\m M_r D$, we have $B[i \det]\subseteq B'$.\\
\indent Let $\mu \in B'$. Suppose there is $\nu \in B[i \det]$ such that $\Ext^1_{M_r D}(\widehat{L_r}(\mu), \widehat{L_r}(\nu)) \neq 0$. Then $\mu \in W_p\bullet(\lambda + i \det)$ since $B' \subseteq W_p\bullet(\lambda + i \det)$ by \cite[II.9.19]{Jantz1}. Now \ref{lem: shift defines bijection between orbits of the affine weyl group} yields $\mu - i\det \in P_r(D)+p^r P(D)$. Applying $[-i\det]$ to a nonzero element of $\Ext^1_{M_r D}(\widehat{L_r}(\mu), \widehat{L_r}(\nu))$, we get $\Ext^1_{G_r T}(\widehat{L_r}(\mu - i\det), \widehat{L_r}(\nu - i\det)) \neq 0$. Since both simple modules lift to $M_r D$, Jantzen's result yields $\Ext^1_{G_r T}(\widehat{L_r}(\mu-i\det), \widehat{L_r}(\nu-i\det)) \cong \Ext^1_{M_r D}(\widehat{L_r}(\mu-i\det), \widehat{L_r}(\nu-i\det)) \neq 0$, so that $\mu - i\det \in B$ and $\mu \in B[i\det]$.
\end{proof}
We do not know whether an analogue of \cite[5.4]{Nak1} and hence a generalization of the foregoing result holds for other reductive group schemes. 

\section*{Acknowledgment}
The results of this article are part of my PhD thesis which I am currently writing at the University of Kiel. I would like to thank my advisor Rolf Farnsteiner for his support and helpful discussions as well as the members of my research team for proofreading.
\bibliographystyle{abbrv}
\bibliography{schur2}

\end{document}